\theoremstyle{plain}
\newtheorem{theorem}{Theorem}[section]
\theoremstyle{plain}
\newtheorem{theoremalph}{Theorem}
\theoremstyle{plain}
\newtheorem*{theoremb}{Theorem B}
\theoremstyle{plain}
\newtheorem{proposition}[theorem]{Proposition}
\theoremstyle{plain}
\theoremstyle{plain}
\theoremstyle{definition}
\newtheorem{definition}[theorem]{Definition}
\theoremstyle{definition}
\theoremstyle{definition}
\theoremstyle{definition}
\theoremstyle{definition}
\theoremstyle{plain}
\newtheorem{lemma}[theorem]{Lemma}
\theoremstyle{remark}
\newtheorem{remark}[theorem]{Remark}
\theoremstyle{plain}
\newtheorem{corollary}[theorem]{Corollary}
\newcommand{\set}[1]{\{#1\}}
\newcommand{\m}[1]{\mathbb{#1}}
\newcommand{\mrm}[1]{\mathrm{#1}}
\newcommand{\inv}{^{-1}}
\newcommand{\toinfty}{^{\infty}}
\newcommand{\cinfclosure}[1]{\overline{#1}\toinfty}
\newcommand{\conj}[2]{{#1}{#2}{#1}\inv}
\newcommand{\rtwo}{\m R^2}
\newcommand{\ztwo}{\m Z^2}
\newcommand{\nstar}{\m N^{*}}
\newcommand{\sone}{\m S^1}
\newcommand{\diam}{\mathrm{diam}}
\newcommand{\den}{\mathrm{den}}
\newcommand{\torus}{\m{T}^2}
\newcommand{\obartor}{\cinfclosure{\mathcal{O}}(\torus)}
\newcommand{\obartormes}{\cinfclosure{\mathcal{O}_{\mu}}(\torus)}
\newcommand{\finegraph}[1]{C^{\dagger} (#1)}
\newcommand{\homeo}{\mrm{Homeo}}
\newcommand{\idtor}{\mrm{Id}_{\torus}}
\newcommand{\subc}[1]{_{C^{#1}}}
\newcommand{\cdist}[3]{d\subc{#1}(#2,#3)}
\newcommand{\shapes}{\mathcal{H}}
\newcommand{\la}[2]{\mrm{LA}(#1,#2)}
\newcommand{\isla}[3]{#1 \in \la{#2}{#3}}
\newcommand{\opnorm}[1]{\|#1\|}
\newcommand{\zon}{\mrm{Zon}}
\newcommand{\stre}{\mrm{Str}}
\title{Torus diffeomorphisms with parabolic and non-proper actions on the fine curve graph and their generalized rotation sets}
\author{Nastaran Einabadi}
\begin{document}
\maketitle

\begin{abstract}
We prove that a generic element of the Anosov-Katok class of the torus, $\obartor$, acts parabolically and non-properly on the fine curve graph $\finegraph{\torus}$. Additionally, we show that a generic element of $\obartor$ admits generalized rotation sets of any point-symmetric compact convex homothety type in the plane.
\end{abstract}

\section{Introduction}
The fine curve graph of a surface $S$, denoted $\finegraph{S}$, was introduced by Bowden, Hensel, and Webb in \cite{bowden2022quasi}. It is analogous to the classical curve graph $C(S)$ which has been extensively studied and used to explore the mapping class group $\mrm{MCG}(S)$. The fine curve graph is a Gromov hyperbolic metric space \cite{bowden2022quasi}. Unlike in the case of the classical curve graph, curves are not considered up to isotopy in $\finegraph{S}$. Hence the fine curve graph is a tool to study the homeomorphism or diffeomorphism groups of a surface. For example, in \cite{bowden2022quasi} the authors showed that for a closed orientable surface $S$ of positive genus, $\mathrm{Diff}_0(S)$ admits unbounded quasi-morphisms. As a result, it is not uniformly perfect and its fragmentation norm is unbounded, answering a question of Burago, Ivanov, and Polterovich \cite{burago2008conjugation}. The definition of the fine curve graph is as follows:
\begin{definition}
    Let $S_g$ be a closed orientable surface of genus $g \geq 1$. The vertices of the \textit{fine curve graph} $\finegraph{S_g}$ are non-contractible simple closed curves on $S_g$. In the case $g \geq 2$, there is an edge between two vertices if the corresponding curves are disjoint. If $g=1$, there is an edge between vertices intersecting at most once. 
\end{definition}
In this article, we provide examples of torus diffeomorphisms acting parabolically and non-properly on $\finegraph{\torus}$ (see Definitions \ref{def:hyp-par-ell} and \ref{def:proper}). The orbits under the action of such diffeomorphisms on $\finegraph \torus$ are unbounded but return to a bounded region infinitely many times. This provides us with a natural example of a non-proper parabolic isometry of a Gromov hyperbolic space. Such isometries have previously been constructed by extending the Edelstein examples from Hilbert spaces to infinite-dimensional hyperbolic spaces via the Poincaré extension \cite{edelstein1964on}. More explanation can be found in an answer by Yves de Cornulier on math overflow \cite{320408} and the book of Das, Simmons, and Urba\'{n}ski which puts an emphasis on non-proper hyperbolic geometry \cite[ch.~11.1.2]{Das2017geometry}. In general, examples of this type do not occur in most usual hyperbolic spaces like proper hyperbolic spaces. We find these diffeomorphisms in the Anosov-Katok class of the torus, denoted by $\obartor$ (this is the set of $C\toinfty$ limits of smooth conjugates of rigid rotations, see equation \eqref{eq:obartor} for a definition).
\begin{theoremalph}
    \label{thm:parabolic-nonproper}
    A generic diffeomorphism in $\obartor$ (similarly in $\obartormes$) acts parabolically and non-properly on the fine curve graph.
\end{theoremalph}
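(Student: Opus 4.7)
The plan is to combine two generic conditions via the Baire category theorem: (i) the action of $f$ on $\finegraph{\torus}$ is \emph{non-elliptic}, so some (equivalently every) orbit is unbounded; and (ii) the action is \emph{non-proper}, meaning for some curve $c_0$ the orbit returns to a bounded neighborhood of $c_0$ infinitely often. Condition (ii) forces the stable translation length to be zero, so combined with (i) it yields parabolicity. It thus suffices to show that each property holds on a dense $G_\delta$ subset of $\obartor$.

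For non-properness, I would exploit the defining feature of $\obartor$: finite-order elements of the form $hR_{p/q}h^{-1}$ are dense by construction. Any such $f_0$ satisfies $f_0^q = \idtor$, so $f_0^q(c_0) = c_0$ realises fine-graph distance zero from any chosen $c_0$. After ensuring transversality of $f^k(c_0)$ with $c_0$, the geometric intersection number is locally constant in $f$ for the $C^\infty$ topology, so for each fixed essential $c_0$ and each $k_0$ the set
\[ V_{k_0,c_0} = \{f \in \obartor : \exists k \geq k_0,\ |f^k(c_0) \cap c_0| \leq 1\} \]
is open in $\obartor$ (giving fine-graph distance at most $1$) and dense, since finite-order conjugates of order $q \geq k_0$ lie in it. Intersecting over a dense countable family of pairs $(c_0, k_0)$ produces a dense $G_\delta$ of non-proper actions.

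For non-ellipticity, I would use Theorem B of this paper (or more elementary rotation-set arguments) to produce, densely in $\obartor$, pseudo-rotations with irrational mean rotation vector and no periodic orbits; this is the classical Anosov--Katok picture. Existing characterisations of elliptic actions on the fine curve graph imply that such an $f$ cannot act elliptically, since an elliptic action for a torus map isotopic to the identity forces a periodic structure incompatible with an irrational pseudo-rotation. Combined with the openness of each condition $\{f : d_{\finegraph{\torus}}(f^k(c_0), c_0) \geq M\}$ (transversality once more), this gives a dense $G_\delta$ of non-elliptic diffeomorphisms.

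The principal obstacle is reconciling the two sides of the argument: density of non-ellipticity demands large displacement of some curve, while density of non-properness demands near-identity iterates, and both must be achievable simultaneously by perturbations \emph{inside} $\obartor$. The inductive nature of the Anosov--Katok scheme should handle this: at step $n+1$ one chooses $h_{n+1}R_{p_{n+1}/q_{n+1}}h_{n+1}^{-1}$ close enough to the current approximation (in a topology strong enough to survive $q_n$-fold iteration) that already-secured large-displacement witnesses at iterates $k \leq q_n$ are preserved by transversality, while a new near-identity iterate appears at time $q_{n+1}$. Once both properties are dense $G_\delta$, Baire yields a dense $G_\delta$ of $f \in \obartor$ acting parabolically and non-properly on $\finegraph{\torus}$; the argument for $\obartormes$ is carried out identically inside the Polish subspace of volume-preserving diffeomorphisms.
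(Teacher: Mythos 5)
Your overall architecture is correct: show non-properness and non-ellipticity are each generic, combine by Baire, and observe that non-proper forces zero translation length, which with non-elliptic gives parabolic. The non-properness half is also in the right spirit — the paper proves that $C^\infty$-rigidity is generic in $\obartor$ (Proposition~\ref{prop:gen-rig}, using exactly the density of finite-order conjugates you invoke, but packaged as $C^\infty$-closeness of iterates to $\idtor$ rather than via intersection numbers), and then a rigid map sends a fixed vertical circle $\alpha$ to a curve still disjoint from a parallel circle $\beta$, bounding the fine-graph distance by $2$ along a subsequence. Your formulation through open sets $V_{k_0,c_0}$ requires care that you gloss over (openness of ``$|f^k(c_0)\cap c_0|\le 1$'' fails at tangencies, and ``$d_{\finegraph{\torus}}(f^k(c_0),c_0)\ge M$'' is not obviously open in $f$), but those are fixable technicalities; the paper avoids them entirely by working with the $C^\infty$ metric.

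The genuine gap is in your non-ellipticity argument. You claim that an irrational pseudo-rotation cannot act elliptically because ``an elliptic action for a torus map isotopic to the identity forces a periodic structure incompatible with an irrational pseudo-rotation.'' This is false. The Guih\'eneuf--Militon characterization (Theorem~\ref{thm:elliptic}) says $f$ acts elliptically if and only if $f$ has bounded deviation in some \emph{rational} direction; irrationality of the rotation vector is irrelevant. The rigid translation $R_\theta$ with totally irrational $\theta$ is a pseudo-rotation with zero deviation in every direction and acts elliptically on $\finegraph{\torus}$ (it moves any curve a bounded fine-graph distance). The same is true for any conjugate of such a translation, and these are dense in $\obartor$. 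Moreover Theorem~B concerns generalized rotation sets and does not bear directly on ellipticity. What actually kills ellipticity in the paper is a \emph{strictly stronger} generic property: weak spreading (Definition~\ref{def:weak-spreading}, generic by Kocsard--Koropecki, Theorem~\ref{thm:weakspreading}). Weak spreading forces unbounded deviation in \emph{all} directions, hence in all rational directions, and only then does Theorem~\ref{thm:elliptic} rule out ellipticity. Without substituting some such deviation-controlling property for ``irrational pseudo-rotation,'' the non-ellipticity half of your argument does not close. Your closing paragraph acknowledges the tension between the two genericity demands, but the resolution you sketch (``the inductive nature of the Anosov--Katok scheme should handle this'') is not a proof; the paper resolves it by citing two independently established residual sets (rigidity, weak spreading) and intersecting them.
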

Additionally, these diffeomorphisms have a bizarre sublinear rotational behaviour. \emph{Generalized rotation sets} were introduced in \cite{guiheneuf2023parabolic} (see Definition \ref{def:sublin-rot-set}). They capture the sublinear diffusion rate of orbits of torus diffeomorphisms. The following theorem shows the abundance of possible homothety types of generalized rotation sets for these maps (see Definiton \ref{def:homothety-type}).
\begin{theoremalph}
\label{thm:generic-all-symmetric}
    A generic diffeomorphism in $\obartor$ (similarly in $\obartormes$) admits generalized rotation sets of all point-symmetric compact convex homothety types in $\rtwo$.
\end{theoremalph}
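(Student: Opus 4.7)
The plan is to combine Baire category with an Anosov--Katok realization scheme. Equip the space of point-symmetric compact convex subsets of $\rtwo$ with the Hausdorff metric, then quotient by homothety to obtain a separable metric space $\shapes_{\mrm{sym}}$ of point-symmetric homothety types, and fix a countable dense family $\{H_n\}_n \subset \shapes_{\mrm{sym}}$. For each $n$, set
\[
U_n = \{f \in \obartor : H_n \text{ is the homothety type of some generalized rotation set of } f\}.
\]
Unwinding Definition \ref{def:sublin-rot-set}, membership in $U_n$ amounts to the existence of a sequence of scales along which certain sublinearly renormalized finite-time orbit deviations Hausdorff-approximate a representative of $H_n$; since each such approximation is an open condition on $f$, the set $U_n$ is $G_\delta$.

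The core of the argument is density of each $U_n$ in $\obartor$. Any $f \in \obartor$ is a $C\toinfty$-limit of conjugates $h_k \circ R_{\alpha_k} \circ h_k\inv$ of rigid rotations. Given $\epsilon > 0$ and a target $H_n$, one extends the construction by one stage: choose a rational rotation vector $p/q$ very close to $\alpha_k$ and a new conjugating diffeomorphism $h_{k+1}$ (obtained from $h_k$ by composition with a compactly-supported, centrally symmetric perturbation) so that the orbits of $h_{k+1} \circ R_{p/q} \circ h_{k+1}\inv$, viewed at a suitable iterate, trace out a set whose renormalized convex hull approximates $H_n$ modulo homothety while keeping $f$ within $\epsilon$ in $C\toinfty$. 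The central symmetry of $R_{p/q}$-orbits about the rotation center, combined with the symmetry enforced on $h_{k+1}$, automatically yields a point-symmetric shape. By Baire's theorem, $G := \bigcap_n U_n$ is a dense $G_\delta$ subset of $\obartor$.

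Finally, one extends the conclusion from $\{H_n\}$ to all of $\shapes_{\mrm{sym}}$. Given an arbitrary $H \in \shapes_{\mrm{sym}}$ and a sequence $H_{n_k} \to H$, for each $k$ one extracts from the membership $f \in U_{n_k}$ a scale $m_k$ at which the renormalized finite-time rotation shape of $f$ lies within $1/k$ of $H_{n_k}$; a diagonal argument then realizes $H$ as a generalized rotation set of $f$ along the sequence $(m_k)$. The same scheme goes through for $\obartormes$ by restricting throughout to volume-preserving conjugations. I expect the principal obstacle to be the density step: the stage-$(k+1)$ conjugation must simultaneously keep $f$ within $\epsilon$ in $C\toinfty$ of the previous stage, distribute orbits along a prescribed convex shape, and do so at a scale compatible with the sublinear normalization built into Definition \ref{def:sublin-rot-set}. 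Reconciling these three constraints should require a careful choice of the denominator $q$, the support and profile of the added conjugating diffeomorphism, and the iteration count at which the shape is read off, leveraging the freedom built into the Anosov--Katok induction.
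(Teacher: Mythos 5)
Your Baire-category framework (countable dense family of shapes, $G_\delta$ sets, intersection) matches the paper's high-level structure, and the paper likewise restricts first to a countable family (rational zonogons) and then passes to the limit using closedness of the family of generalized rotation sets. But the density step, which you correctly flag as "the principal obstacle," is precisely where the entire content of the proof lives, and your sketch of it does not work as stated. First, if you conjugate the exactly rational rotation $R_{p/q}$ by a diffeomorphism $h$ homotopic to the identity, the $q$-th iterate of the lift is an integer translation, so $\diam\bigl(\tilde f^{n}(D)\bigr)$ stays bounded; a small, compactly supported perturbation of $h$ cannot produce a large-diameter iterate of $D$ approximating $H_n$. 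Second, and more fundamentally, even if you use a nearby non-exact rotation vector $\theta$, the conjugate $\conj{h'}{R_{\theta}}$ converges to $\conj{h'}{R_{(p/q,0)}}$ as $\theta \to (p/q,0)$, not to the target $\conj{g}{R_{(p/q,0)}}$; since you want $h'$ to be wild enough to spread $D$ along a prescribed shape, it cannot simultaneously be $C^\infty$-close to $g$. The paper resolves this tension with a trick your proposal lacks: it builds $h$ so that $h$ commutes with $R_{(1/q,0)}$ (Lemma \ref{lem:two}, via the conjugation $C_q$ of Lemma \ref{lem:conj-by-C}), which forces $\conj{h}{R_{(p/q,0)}} = R_{(p/q,0)}$ and hence $\conj{h}{R_{\theta_i}} \to R_{(p/q,0)}$ no matter how wild $h$ is. Then it conjugates by the fixed $g$ to land near $\conj{g}{R_{(p/q,0)}}$, with Lemma \ref{lem:one} controlling how this conjugation transforms the homothety type (from $A^{-1}\Gamma$ to $\Gamma$ when $g$ is homotopic to $f_A$). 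Your sketch has no analogue of either ingredient.

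There are two further issues. The actual mechanism producing the prescribed shape, the composition of shear-type maps $\conj{A_i}{J_{\eta_i,\xi}}$ that spread a fundamental domain in successive directions to approximate a zonogon (Lemma \ref{lem:spreading-to-parallelogon}, with Lemma \ref{lem:spread-shape} and Corollary \ref{cor:induction-step} as the workhorses), is not something a vague "compactly-supported, centrally symmetric perturbation" will supply; there is a real combinatorial construction here. And your explanation of why the resulting shapes are point-symmetric is off: it is not because "$R_{p/q}$-orbits are centrally symmetric" or because the conjugation is chosen symmetric, but simply because the construction realizes zonogons, which are Minkowski sums of segments and hence intrinsically point-symmetric. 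Indeed the paper explicitly remarks that it is not known whether non-symmetric generalized rotation sets can be excluded in this class, so the symmetry is a limitation of the construction, not an automatic consequence of the method as you suggest.
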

\begin{remark}
    As of now, we are not aware of any argument implying that non-symmetric generalized rotation sets do not occur in this class. The construction of such examples can be the subject of further investigation.
\end{remark}
\subsection{Context and methods}
The group of homeomorphisms of a surface $\homeo(S)$ acts on $\finegraph S$ by graph automorphisms and hence by isometries. In fact, it was shown in \cite{long2021automorphisms} that for a closed orientable surface $S$ of genus $\geq 2$, the automorphism group of $\finegraph S$ is isomorphic to the group of homeomorphisms of $S$. This result was then extended to the case of the torus and more general surfaces by Le Roux and Wolff \cite{roux2022automorphisms}, where they consider the non-seperating transverse curve graph. \par
Isometries of $\finegraph S$ can be classified as hyperbolic, parabolic, and elliptic (see Definition \ref{def:hyp-par-ell}). It is possible to determine the class of the action of a torus homeomorphism by looking at its dynamical, and more specifically rotational behaviour. This is due to \cite{bowden2022rotation} and \cite{guiheneuf2023parabolic}. For higher genus surfaces the hyperbolic case was done by Guihéneuf and Militon using the ergodic homological rotation set \cite{guiheneuf2023hyperbolic}.\par
Let $f: \torus \to \torus$ be a homeomorphism homotopic to the identity, and $\Tilde{f}: \rtwo \to \rtwo$ a lift of $f$. Let $D$ be a fundamental domain of the action of $\ztwo$ on $\rtwo$ by translations. Then the rotation set of $\Tilde{f}$ can be defined as:
\begin{equation*}
    \rho(\Tilde{f}) = \lim_{n \to \infty} \frac{\Tilde{f}^n(D)}{n}
\end{equation*}
where the limit is taken in the Hausdorff topology. This limit exists, and is independent of the choice of the fundamental domain $D$ by the work of Misiurewicz and Ziemian \cite{misiurewicz1989rotation}. The rotation set $\rho(\Tilde{f})$ is a compact convex subset of $\rtwo$ \cite{misiurewicz1989rotation}, which captures the linear speed of the orbits. Up to integer translations, the set $\rho(\Tilde{f})$ is independent of the choice of the lift $\Tilde{f}$. \par
We still do not have a complete picture of which convex subsets of $\rtwo$ can be realized as a rotation set. In the case of empty interior, it's possible to find examples whose rotation sets are singletons, rational segments passing through rational points or irrational segments with a rational extremity \cite{handel1989periodic}. Franks and Misiurewicz conjectured that these are the only possibilities \cite{franks1990rotation}. Le Calvez and Tal proved in \cite{lecalvez2018forcing} that the boundary of a rotation set cannot contain a segment of irrational slope which contains a rational point in its interior. In particular, no irrational segment having a rational point in its interior can be realized as a rotation set. However Avila has shown the existence of a diffeomorphism whose rotation set is a segment of irrational slope not passing through any rational points, providing a counter-example to the Franks-Misiurewicz conjecture. The case of rational segments is still open. In \cite{kwapisz1992polygon} Kwapisz showed that any rational polygon can be realized as a rotation set. But these are not the only possible rotation sets with non-empty interior. In his later work \cite{kwapisz1995nonpolygonal}, he provided an example of a rotation set with infinitely many rational extreme points accumulating on two partially irrational extreme points. It was shown in \cite{boyland2016new} that totally irrational extreme points are also possible. \par
In \cite{guiheneuf2023parabolic}, \emph{generalized rotation sets} were introduced. They were then used to give a criterion for non-ellipticity of the action of a homeomorphism on the fine curve graph. While the classical rotation set does not capture sublinear diffusion of orbits, such growth can be captured by the generalized rotation set. Fix $D = [0,1]^2$, and $\Tilde{x}_0 \in D$.
\begin{definition}
\label{def:sublin-rot-set}
    If for some sequence of natural numbers $(n_i)_{i \in \m N}$ the limit
    \begin{equation*}
        \lim_{i \to \infty} \frac{\Tilde{f}^{n_i}(D) - \Tilde{f}^{n_i}(\Tilde{x}_0)}{\diam(\Tilde{f}^{n_i}(D))}
    \end{equation*}
    exists in the Hausdorff topology, and $\lim_{i \to \infty} \diam(\Tilde{f}^{n_i}(D)) = +\infty$, then this limit is called a \emph{generalized rotation set}.
\end{definition}
It was shown in \cite{guiheneuf2023parabolic} that generalized rotation sets are compact convex subsets of $\rtwo$ with diameter $1$ containing the origin. \par
We use the approximation by conjugation method of Anosov and Katok \cite{anosov1970new} to find our exotic examples. This method has been a useful tool for providing examples of diffeomorphisms with interesting dynamical properties like minimality, unique ergodicity, and weak-mixing. The scheme can be applied to any manifold admitting a locally free smooth circle action. The maps obtained by this method fall in the Anosov-Katok class of diffeomorphisms, that is, the $C\toinfty$-closure of smooth conjugates of elements of a locally free smooth circle action. Later, Fathi and Herman \cite{fathi1977existence} combined this method with generic arguments to prove the genericity of such properties in this class.\par
On the torus, denote by $R_{\theta}$ the rigid translation sending $x$ to $x+\theta$ for some vector $\theta \in \torus$. Then, the Anosov-Katok class can be written as follows:
\begin{equation}
\label{eq:obartor}
    \obartor = \cinfclosure{\{\conj{h}{R_{\theta}} \mid h \in \mathrm{Diff}\toinfty(\torus),\theta \in \torus \}}.
\end{equation}
Consider this class with the usual $C\toinfty$ metric, with respect to which it is a complete metric space. Note that any smooth faithful circle action on $\torus$ is free \cite[thm.~9.3]{bredon1972introduction}, and that any free circle action on the torus is smoothly conjugate to a circle action by translations in the horizontal direction. Therefore the Anosov-Katok class is the same no matter which circle action is chosen.
\begin{definition}
    Let $X$ be a topological space, and let $S \subseteq X$ be a subset. We say $S$ is \emph{residual} if it is a countable intersection of open dense subsets. A property is said to be \emph{(Baire) generic} in $X$ if it holds on a residual set.
\end{definition}
Note that by definition, a countable intersection of residual sets is residual. Hence, if we have countably many generic properties in a space, a generic element in that space has all of those properties.
\begin{remark}
    Let $\obartormes$ denote the area preserving Anosov-Katok class, for which the conjugation maps $h$ are chosen to be area preserving diffeomorphisms. All of our proofs work for both $\obartor$ and $\obartormes$. In the rest of the article, we will only write down the arguments for $\obartor$.
\end{remark}
\begin{remark}
    By a result of Katok, a $C\toinfty$ diffeomorphism of a compact surface with positive topological entropy admits a hyperbolic horseshoe. Then by structural stability of hyperbolic sets, any $C\toinfty$ perturbation of such a diffeomorphism still admits a horseshoe and has positive entropy. So the set of torus diffeomorphisms with positive entropy is an open subset of $\mrm{Diff}\toinfty(\torus)$. As rotations have zero topological entropy, all elements of the Anosov-Katok class $\obartor$ have zero topological entropy as well.
\end{remark}
\subsection{Outline}
In \cite{kocsard2009mixing}, Kocsard and Koropecki introduced weak spreading torus homeomorphisms (see Definition \ref{def:weak-spreading}), and used the methods of Fathi and Herman to prove that they are generic in $\obartor$. In Section \ref{sec:non-proper-parabolic}, we combine this result with the genericity of smoothly rigid maps in $\obartor$ and the classification of isometries by Guihéneuf and Militon \cite{guiheneuf2023parabolic} to prove Theorem \ref{thm:parabolic-nonproper}. \par
The possible homothety types of generalized rotations sets of a generic element of $\obartor$ are investigated in Section \ref{sec:generic-shapes}. We will prove Theorem \ref{thm:generic-all-symmetric} by adapting Kocsard and Koropecki's approach in \cite{kocsard2009mixing} to using the Fathi-Herman method. This strategy is laid out in Subsection \ref{sec:fathi-herman}. For a summary of the ideas appearing in the proof, see the beginning of Section \ref{sec:generic-shapes}. The rest of the section is dedicated to the introduction of necessary preliminaries and to the details of the proofs.
\subsection*{Acknowledgements}
I would like to express my deepest gratitude to Pierre-Antoine Guihéneuf, whose insightful discussions and ideas have had a profound impact on this work. I also wish to thank Yves de Cornulier, Sebastian Hensel, Yusen Long, Emmanuel Militon, and Maxime Wolff for their useful comments, corrections, and stimulating discussions related to the subject of this paper.

\section{Non-proper parabolic isometries of the fine curve graph}
\label{sec:non-proper-parabolic}
We plan to prove Theorem \ref{thm:parabolic-nonproper}. Let us clarify the statement of the theorem. Let $\varphi$ be an isometry of any metric space $(X,d)$. For $x \in X$, the limit
$$\lim_{n \to \infty} \frac{d({\varphi}^n(x),x)}{n}$$
exists and does not depend on $x$. We call this value the \emph{asymptotic translation length} of $\varphi$, and denote it by $|\varphi|_X$.
\begin{definition}
\label{def:hyp-par-ell}
    Let $\varphi$ be an isometry of $(X,d)$. It is said to be
    \begin{enumerate}[a)]
        \item \textit{Hyperbolic} if $|\varphi|_X$ is positive.
        \item \textit{Parabolic} if $|\varphi|_X = 0$ and no orbit of $\varphi$ in $X$ is bounded.
        \item \textit{Elliptic} if $|\varphi|_X = 0$ and there exists a bounded orbit for $\varphi$ in $X$ (in which case all of the orbits will be bounded).
    \end{enumerate}
\end{definition}
\begin{definition}
\label{def:proper}
    An isometry $\varphi$ of $(X,d)$ is said to be \textit{proper} if for any point $x \in X$ and any infinite set of natural numbers $I$, the set $\{ \varphi^i(x) \}_{i \in I} \subseteq X$ is unbounded.
\end{definition}

\begin{remark}
    This means that for any point $x$, the inverse image of a bounded subset of $X$ under the function $i \mapsto \varphi^i(x)$ is bounded, which justifies the use of the terminology \emph{proper} in analogy with continuous maps under which the inverse image of a compact set is compact. In the literature the term metrically proper is also used and it is equivalent to our definition of proper for the action of $\m Z$ by iterates of the isometry.
\end{remark}

The concept of rigidity was adapted to topological dynamics by Glasner and Maon \cite{glasner1989rigidity}. Let us repeat a smooth version of the definition here.

\begin{definition}
\label{def:rigidity}
    Let $f$ be a $C\toinfty$ diffeomorphism of the torus $\torus$. It is said to be \emph{$C\toinfty$-rigid} if there exists an increasing sequence of positive integers $(n_i)_{i \in \m N}$ such that
    \begin{equation*}
        \lim_{i \to + \infty} \cdist{\infty}{f^{n_i}}{\idtor} = 0.
    \end{equation*}
\end{definition}
The following result is classic. See \cite[thm.~4.9]{yancey2013weakly} for a similar result in the non-smooth case.
\begin{proposition}
    \label{prop:gen-rig}
    The set of $C\toinfty$-rigid diffeomorphisms is generic in $\obartor$ and $\obartormes$.
\end{proposition}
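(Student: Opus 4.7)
The plan is to show that the set $R$ of $C^\infty$-rigid diffeomorphisms in $\obartor$ is both a $G_\delta$ subset and a dense subset, from which the conclusion follows via the Baire category theorem (applicable because $(\obartor, d_{C^\infty})$ is a complete metric space, as recalled in the paper).

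For the $G_\delta$ property, I would write
\begin{equation*}
R = \bigcap_{k \in \nstar} \bigcap_{m \in \nstar} \bigcup_{n \geq m} V_{k,m,n}, \qquad V_{k,m,n} = \{f \in \obartor : \cdist{k}{f^n}{\idtor} < 1/k\},
\end{equation*}
and verify that each $V_{k,m,n}$ is open by continuity of the $n$-fold composition map $f \mapsto f^n$ from $(\obartor, d_{C^\infty})$ to $(\mrm{Diff}\toinfty(\torus), d_{C^k})$ for fixed $n$ and $k$. Membership in this intersection means that for every $k$ and every $m$ one can find $n \geq m$ with $\cdist{k}{f^n}{\idtor} < 1/k$, and a standard diagonal extraction then produces an increasing sequence $(n_i)$ with $f^{n_i} \to \idtor$ in every $C^k$-norm, i.e.\ in $C^\infty$; this is exactly $C^\infty$-rigidity.

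For density, I would use the fact that conjugates of rigid rotations are dense in $\obartor$ by the very definition of the Anosov-Katok class, so it suffices to approximate each generator $\conj{h}{R_\theta}$ by rigid diffeomorphisms. Choose rational translation vectors $\theta_j \in \torus$ with $\theta_j \to \theta$. Each $R_{\theta_j}$ is periodic of some order $q_j$, so $\conj{h}{R_{\theta_j}}$ is also periodic of order dividing $q_j$. Any periodic diffeomorphism $g$ with $g^q = \idtor$ is trivially $C^\infty$-rigid via the sequence $n_i = qi$. Since conjugation by the fixed $h$ is continuous in the $C^\infty$ topology and $R_{\theta_j} \to R_\theta$ in $C^\infty$, we obtain $\conj{h}{R_{\theta_j}} \to \conj{h}{R_\theta}$ in $\obartor$, establishing density. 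The argument transfers verbatim to $\obartormes$ since rigid translations are area-preserving and $h$ may be taken area-preserving.

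I do not anticipate a substantive obstacle. The only mildly technical point is the continuity of $n$-fold composition in every $C^k$-norm, which is routine from the chain rule together with the uniform control of derivatives provided by $C^\infty$-convergence; no dynamical input beyond the definition of $\obartor$ is needed.
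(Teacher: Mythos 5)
Your proposal is correct and takes essentially the same approach as the paper: both rely on continuity of $f \mapsto f^n$ to get openness, and on the fact that $\conj{h}{R_{p/q}}$ is periodic (hence trivially rigid) and that such maps are dense in $\obartor$ by definition. The only cosmetic difference is that the paper works directly with $C^\infty$-balls $B_{C^\infty}(\idtor,1/k)$ and a single intersection $\bigcap_k P_k$, producing a residual set $P$ contained in the set of rigid maps, whereas you decompose into $C^k$-seminorms with a triple index and a diagonal extraction to exhibit the full set of rigid maps as a $G_\delta$; both yield genericity.
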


\begin{proof}
    For $q \in \m N$, denote by $\Phi_n: \obartor \to \obartor$ the map sending $f$ to $f^n$. This is a continuous map of $\obartor$ with respect to the $C\toinfty$ topology\footnote{This can be seen using the multivariate Faà di Bruno formula \cite{Constantine1996AMF}.}. For each $k \in \nstar$, let the set $P_k \subseteq \obartor$ be as follows:
    \begin{equation}
    \label{eq:Pk}
        P_k = \displaystyle\bigcup_{n \geq k}
        \Phi_n\inv\left(B\subc{\infty}(\idtor,\frac{1}{k})\right).
    \end{equation}
    By continuity of the maps $\Phi_n$ this is an open subset of $\obartor$. Let us define $Q \subseteq \obartor$ as:
    \begin{equation}
    \label{eq:Q}
        Q=\left\{\conj{h}{R_{\frac{p}{q}}} \mid p \in \mathbb{Z}^2, q \in \nstar, h \in \mathrm{Diff}\toinfty (\torus) \right\}.
    \end{equation}
    For each $k \in \nstar$, we have $(\conj{h}{R_{\frac{p}{q}}})^{kq} = \idtor$, so $Q \subseteq P_k$. On the other hand, $Q$ is dense in $\obartor$, therefore $P_k$ is also dense. Hence, $P=\cap_k P_k$ is a Baire generic subset of $\obartor$. Let $f$ be any map in $P$. For each $k \in \nstar$ there exists $n_k \geq k$ such that $\cdist{\infty}{f^{n_k}}{\idtor} < \frac{1}{k}$. So, $f$ is $C\toinfty$-rigid. Therefore $P$ is a residual set of $C\toinfty$-rigid maps in $\obartor$.
\end{proof}

\begin{definition}
\label{def:epsilon-dense}
    Let $X,Y \subseteq \rtwo$, and $\varepsilon > 0$. The set $X$ is said to be \emph{$\varepsilon$-dense} in $Y$ if $Y \subseteq {B}_{\varepsilon}(X)$.
\end{definition}

Weak-spreading torus homeomorphisms were introduced as follows in \cite{kocsard2009mixing}. This property is stronger than topological weak mixing.
\begin{definition}
\label{def:weak-spreading}
    A homeomorphism $f$ of the torus $\torus$ is said to be \textit{weak spreading}, if it has a lift $\Tilde{f}$ to $\mathbb{R}^2$ such that for any non-empty open set $U \subseteq \mathbb{R}^2$, any $\varepsilon > 0$, and any $R > 0$, there exist an integer $n > 0$, and a ball of radius $R$ in which $\Tilde{f}^n(U)$ is $\varepsilon$-dense.
\end{definition}
\begin{theorem}[\cite{kocsard2009mixing}]
    \label{thm:weakspreading}
    Weak spreading diffeomorphisms are generic in $\obartor$ and $\obartormes$.
\end{theorem}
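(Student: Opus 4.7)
The plan is to express weak-spreading as a countable intersection of open dense subsets of $\obartor$ and apply the Baire category theorem. First I would fix a countable basis $\{U_m\}_{m \in \m N}$ for the topology of $\rtwo$ consisting of rational open boxes. For each triple $(m,k,l) \in \m N \times \nstar \times \nstar$ set
\begin{equation*}
    W_{m,k,l} = \left\{ f \in \obartor \,:\, \exists\, n \geq 1,\ \exists\, B \subseteq \rtwo \text{ ball of radius } l,\ \Tilde{f}^n(U_m) \text{ is } \nicefrac{1}{k}\text{-dense in } B \right\}
\end{equation*}
for some (equivalently, any) lift $\Tilde f$, the condition being invariant under integer translations of the lift. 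A diffeomorphism is weak-spreading exactly when it belongs to $\bigcap_{m,k,l} W_{m,k,l}$: given any open $U$, any $\varepsilon > 0$, and any $R > 0$, pick $U_m \subseteq U$, $k \geq 1/\varepsilon$, $l \geq R$, and observe that $\Tilde f^n(U) \supseteq \Tilde f^n(U_m)$ while $\nicefrac 1 k$-density in the ball of radius $l$ forces $\varepsilon$-density in any concentric sub-ball of radius $R$.

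Openness of each $W_{m,k,l}$ is immediate from $C^0$-continuity of iteration: if $(n, B)$ witnesses $f \in W_{m,k,l}$, then for every $g$ sufficiently $C^\infty$-close to $f$ (hence $C^0$-close up to iterate $n$), the set $\Tilde g^n(U_m)$ is Hausdorff-close to $\Tilde f^n(U_m)$ and therefore remains $\nicefrac 1 k$-dense in a slightly translated ball of radius $l$. A mild reshuffling of the countable indexing absorbs the small loss.

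Density is the substantive step. As in the proof of Proposition \ref{prop:gen-rig}, the set $Q = \{hR_{p/q}h\inv \,:\, p \in \ztwo,\ q \in \nstar,\ h \in \mrm{Diff}\toinfty(\torus)\}$ is dense in $\obartor$, so by a triangle-inequality argument it suffices to show that every $f_0 = hR_{p/q}h\inv \in Q$ admits arbitrarily $C^\infty$-close elements of $W_{m,k,l}$. I would produce one via one step of the Anosov-Katok approximation-by-conjugation scheme: replace $h$ by $h' = h \circ g$, where $g \in \mrm{Diff}\toinfty(\torus)$ commutes with $R_{p/q}$ and, on a fundamental domain of $R_{p/q}$, realizes a large smooth shear that stretches $\Tilde h\inv(U_m)$ across a prescribed ball of radius $l$; then replace $p/q$ by a rational $p'/q'$ with $q'$ enormous and $|p'/q' - p/q|$ correspondingly tiny. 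The commutation gives $h' R_{p/q} (h')\inv = f_0$ exactly, so $\|h' R_{p'/q'} (h')\inv - f_0\|_{C^\infty}$ can be driven below any threshold by shrinking $|p'/q' - p/q|$ against the $C^\infty$-norms of $h'$. Iterating this perturbed map transports $U_m$ through a near-equidistributed sample of the $R_{p/q}$-orbit of $\Tilde h\inv(U_m)$, and composition with the shear of $h'$ scatters these translates $\nicefrac 1 k$-densely across a ball of radius $l$.

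The principal obstacle is this density step: $g$ must be a large enough shear to guarantee $\nicefrac 1 k$-density in a ball of prescribed radius $l$, while the pair $(g, p'/q')$ must be chosen so that $h' R_{p'/q'} (h')\inv$ stays within the prescribed $C^\infty$-neighborhood of $f_0$. The $C^\infty$-smallness constrains all uniform derivatives of the perturbation, forcing the shear to be absorbed by smallness of $p'/q' - p/q$ and largeness of $q'$; balancing these against the $C^\infty$-norms of iterated compositions is the delicate quantitative heart of the Fathi-Herman-Kocsard-Koropecki construction. Once this is carried through, the final assembly (countable intersection and Baire) is routine, and the same scheme carries over to $\obartormes$ by taking $g$ and the auxiliary shear to be area-preserving.
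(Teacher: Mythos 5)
The paper does not prove Theorem~\ref{thm:weakspreading}: it is stated as a citation of Kocsard and Koropecki \cite{kocsard2009mixing}, so there is no in-paper argument to compare against. Your general skeleton does match the method of the cited paper: write weak spreading as a countable intersection of sets $W_{m,k,l}$ indexed by a rational basis $\{U_m\}$, resolutions $1/k$, and radii $l$; prove openness from $C^0$-continuity of $f\mapsto f^n$; prove density by one round of the approximation-by-conjugation scheme with a conjugator commuting with $R_{p/q}$, then Baire. The reduction of weak spreading to $\bigcap_{m,k,l} W_{m,k,l}$ is correct, and openness in fact goes through with no ``reshuffling of the indexing'' because the strict inequality in the definition of $\varepsilon$-density (Definition~\ref{def:epsilon-dense}) gives slack on the compact ball.

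The genuine gap is in the density step, which you rightly flag as the heart but do not carry out, and the mechanism you sketch would not work as stated. First, the set transported by $\tilde R_{n\alpha}$ inside $\tilde f_1^n = \tilde h\,\tilde g\,\tilde R_{n\alpha}\,\tilde g\inv\,\tilde h\inv$ is $\tilde g\inv\tilde h\inv(U_m)$, not $\tilde h\inv(U_m)$, so the picture of ``$g$ stretching $\tilde h\inv(U_m)$'' and then sampling the orbit is not the relevant one. Second, the definition of weak spreading demands density at a single iterate $n$, not along a union of iterates, so ``near-equidistribution of the $R_{p/q}$-orbit'' is not the mechanism; what matters is a well-chosen single $n$ at which the translation by $n\,p'/q'$ desynchronizes the phases of the shear $g$ and its inverse. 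Third, and most seriously, a one-directional shear such as $J_{\eta,\xi}$ preserves the horizontal extent of any set, so $\tilde g\,\tilde R_{n\alpha}\,\tilde g\inv(V)$ is trapped in a strip whose width equals that of $V$; it cannot be $\varepsilon$-dense in a two-dimensional ball of radius $l$ once $l$ exceeds half that width. This is exactly the obstruction the paper works around in Lemma~\ref{lem:spreading-to-parallelogon} by taking a product of shears $\conj{A_i}{J_{\eta_i,\xi}}$ conjugated into several distinct directions. To make your density step correct you would need to replace ``a large smooth shear'' by such a multi-directional product (constructed to commute with $R_{(1/q,0)}$ via Lemma~\ref{lem:conj-by-C} or an equivalent device), track quantitatively how the shift by $n\,p'/q'$ misaligns the shear phases to produce spreading in all directions, and then balance the choice of $\xi$, $\eta_i$, and $|p'/q'-p/q|$ against the $C^\infty$-norms of $h'$. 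None of that is done in the proposal.
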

\begin{definition}
    Let $v \in \m R^2 \setminus \{0\}$. We say $f \in \mathrm{Homeo}(\torus)$ has \emph{bounded deviation in direction} $v$, if there exist $M>0$, $\rho \in \rtwo$, and a lift $\Tilde{f}:\m R^2 \to \m R^2$ of $f$ such that for any $x \in \m R^2$ and $n \in \m N$, $| \langle \Tilde{f}^n(x) - x - n \rho, v \rangle | < M$.
\end{definition}
\begin{remark}
    The reader may easily check that weak spreading maps have unbounded deviation in all directions, since they spread arbitrarily small open sets in all directions.
\end{remark}

The following is the main theorem of \cite{guiheneuf2023parabolic}:
\begin{theorem}
\label{thm:elliptic}
    Let $f \in \mathrm{Homeo}(\torus)$. Then $f$ acts elliptically on $\finegraph{\torus}$ if and only if it has bounded deviation in some rational direction $v \in \m Q^2 \setminus \{0\}$.
\end{theorem}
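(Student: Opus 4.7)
The plan is to prove the two directions of the equivalence separately, using straight simple closed curves in rational directions as probes for the rotational behaviour of $f$.

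For the forward direction "bounded deviation in a rational direction $\Rightarrow$ elliptic action", let $v=(p,q) \in \m Q^2 \setminus \{0\}$ be a primitive integer direction in which $f$ has deviation bounded by $M$. By conjugating by a matrix of $SL_2(\m Z)$ (which acts on $\torus$ by diffeomorphism, hence as an isometry of $\finegraph{\torus}$), I reduce to $v = (1,0)$. Take $\alpha$ to be the vertical simple closed curve $\{0\} \times (\m R/\m Z)$. Bounded deviation in the first coordinate forces $\tilde f^n(\tilde\alpha)$ into the vertical strip of horizontal width $2M$ about the line $\{n\rho_1\} \times \m R$. When $2M < 1$ the projection of this strip to $\torus$ is a proper sub-annulus, so any vertical simple closed curve $\alpha'$ disjoint from it is disjoint from both $\alpha$ and $f^n(\alpha)$, giving distance at most $2$ in $\finegraph{\torus}$. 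For $M \geq 1/2$ one needs to supplement the vertical probes by a horizontal probe: $f^n(\alpha)$ has primitive class $(0,1)$ and is simple, and I would show there always exists a horizontal curve $\beta$ intersecting $f^n(\alpha)$ in the minimal number $1$ of points; then $\beta$ is adjacent to both $\alpha$ and $f^n(\alpha)$ in $\finegraph{\torus}$, again bounding the distance by a universal constant.

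For the backward direction "elliptic $\Rightarrow$ bounded deviation in a rational direction", suppose the $f$-orbit of a simple closed curve $\alpha$ has diameter at most $N$ in $\finegraph{\torus}$. As $f$ is isotopic to the identity, each $f^n(\alpha)$ shares the primitive homology class of $\alpha$, say $v^\perp = (-q,p)$; set $v = (p,q)$. For each $n$ pick a chain $\alpha = \gamma_0, \ldots, \gamma_N = f^n(\alpha)$ in $\finegraph{\torus}$ with consecutive curves intersecting at most once. My plan is to prove a geometric lemma: if two simple closed curves on $\torus$ intersect in at most one point, then lifts of them to $\rtwo$ can be chosen whose Hausdorff distance in the direction $v$ is bounded by a universal constant depending only on the homology classes involved. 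Iterating the lemma along the chain controls $|\langle \tilde f^n(\tilde x) - \tilde x - n\rho, v\rangle|$ by $CN$ for $\tilde x$ on the lift of $\alpha$, and $\ztwo$-equivariance of $\tilde f$ together with the continuity of $\tilde f^n$ on a fundamental domain extends the bound to all of $\rtwo$.

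The main obstacle is this geometric lemma underpinning the second direction: quantitatively translating a combinatorial bound (intersection at most one on $\torus$) into a bound on transverse Hausdorff distance in $\rtwo$. A simple closed curve on $\torus$ of primitive homology class lifts to a topological line staying within some bounded neighbourhood of the corresponding straight line, but a priori this neighbourhood can be arbitrarily wide. The crux is that when two such curves admit representatives with only one intersection, their widths cannot be simultaneously large in the transverse direction without forcing further intersections, so the intersection constraint propagates into a uniform width bound. A secondary obstacle is the case $M \geq 1/2$ of the first direction, where I rely on the existence of a horizontal probe with geometric intersection exactly $1$ with $f^n(\alpha)$; this requires justification via a careful analysis of how a simple closed curve of class $(0,1)$ crosses the horizontal foliation.
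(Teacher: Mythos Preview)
The paper does not prove Theorem~\ref{thm:elliptic}. It is stated there as ``the main theorem of \cite{guiheneuf2023parabolic}'' and is used as a black box in the proof of Theorem~\ref{thm:parabolic-nonproper}. So there is no argument in the paper for you to compare against; the result you are attempting to reprove is the principal theorem of a separate article, and its proof there is substantially longer and more delicate than the outline you give.

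Independently of that, your sketch has real gaps in both directions.

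\emph{Forward direction.} Your handling of the case $M\ge 1/2$ does not work. If $\beta$ is required to be a straight horizontal circle, then it is adjacent to $\alpha$, but there is no reason it meets $f^n(\alpha)$ only once: one can build a simple closed curve of class $(0,1)$ whose second coordinate, as a degree-one map $S^1\to S^1$, has at least three preimages over every value (take a piecewise-linear ``up--down--up'' profile and separate the three strands with a small $x$-oscillation). Such a curve meets every straight horizontal circle in at least three points. Conversely, if you let $\beta$ be an arbitrary curve of class $(1,0)$ chosen to meet $f^n(\alpha)$ once, you lose control of $d(\alpha,\beta)$, and the argument collapses. A correct proof of this implication needs a different mechanism to bound $d(\alpha,f^n(\alpha))$ in terms of the strip width $2M$.

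\emph{Backward direction.} Your key lemma, as formulated, is false. The intermediate curves $\gamma_i$ in a length-$N$ path of $\finegraph{\torus}$ may have arbitrary homology classes; once some $\gamma_i$ has class not orthogonal to $v$, its lift is a line with unbounded $v$-component, and ``Hausdorff distance in the direction $v$'' between lifts of $\gamma_i$ and $\gamma_{i+1}$ is infinite or undefined. Even restricting to curves of the same class, a single intersection (or disjointness) does not by itself bound transverse width: your heuristic ``widths cannot be simultaneously large without forcing further intersections'' fails, since one curve can be straight while the other wanders arbitrarily far and returns without extra crossings. Finally, since your putative constant ``depends on the homology classes involved'' and those classes along the chain are uncontrolled, nothing prevents the accumulated bound from depending on $n$. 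The argument in \cite{guiheneuf2023parabolic} does not proceed by a chain estimate of this kind.
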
 

\begin{proof}[Proof of Theorem \ref{thm:parabolic-nonproper}]
    By Theorem \ref{thm:weakspreading}, weak spreading maps are generic in $\obartor$. Therefore by Proposition \ref{prop:gen-rig}, a generic element of $\obartor$ is weak-spreading and rigid. \par
    \begin{sloppypar}
        Let $f$ be a rigid weak spreading homeomorphism of $\torus$. We show that $f$ acts parabolically and non-properly on the fine curve graph $\finegraph \torus$. Let $\alpha,\beta:\m R/\m Z \to \torus \simeq \rtwo / \ztwo$ be two curves on the torus parameterized as ${\alpha(t)=[(0,t)]}$ and $\beta(t)=[(1/2,t)]$. These closed curves are disjoint, simple, and non-contractible. Therefore by rigidity of $f$, there exists an increasing sequence of positive integers $(n_i)_{i \in \mathbb{N}}$ such that $f^{n_i}(\alpha)$ is disjoint from $\beta$. Hence, $\alpha$ and $f^{n_i}(\alpha)$ are at distance at most $2$ in $\finegraph{\torus}$. So $(f^{n_i}(\alpha))_i$ is bounded in the fine curve graph, and the action of $f$ on $\finegraph{\torus}$ is non-proper.
    \end{sloppypar}
    As a consequence, the asymptotic translation length is zero, so this action is not hyperbolic. On the other hand, weak spreading maps have unbounded deviation in all directions. Therefore by Theorem \ref{thm:elliptic}, $f$ cannot act elliptically so it must act parabolically.
\end{proof}
\begin{remark}
    Let $f \in \mrm{Diff}\toinfty(\torus)$ be rigid and weak spreading, and $\Tilde{f}$ a lift to $\rtwo$. Then $\liminf_{n \to \infty} \diam(\Tilde{f}^n(D)) < +\infty$, and $\limsup_{n \to \infty} \diam(\Tilde{f}^n(D)) = +\infty$. Therefore a generic map in $\obartor$ has this property. This answers a question posed in \cite{guiheneuf2023parabolic} about the existence of such torus homeomorphisms.
\end{remark}
\subsection{Generic rotation sets in the Anosov-Katok class}
A torus homeomorphism is called a \emph{pseudo rotation} if its rotation set consists of a single vector (which we refer to as the map's rotation vector). Any rigid homeomorphism of the torus homotopic to the identity is a pseudo rotation. This is true since for any such homeomorphism, there exists a subsequence of the iterates of the fundamental domain by a lift such that each of those iterates has bounded diameter. So the following corollary is a direct result of Proposition \ref{prop:gen-rig}.
\begin{corollary}
\label{cor:gen-pr}
    Pseudo rotations are generic in $\obartor$ and $\obartormes$.
\end{corollary}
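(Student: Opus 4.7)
The plan is essentially to stitch together Proposition \ref{prop:gen-rig} with the informal assertion made in the paragraph just above the corollary, namely that any rigid torus homeomorphism homotopic to the identity is a pseudo rotation. The argument breaks into three short observations, and there is no serious obstacle to overcome.

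First, I would note that every element of $\obartor$ is homotopic to the identity. By definition $\obartor$ is the $C\toinfty$-closure of the set of $\conj{h}{R_{\theta}}$, and each such map is isotopic to the identity via $\conj{h}{R_{t\theta}}$ for $t \in [0,1]$. Being homotopic to the identity is preserved under $C^0$-limits, so the same holds throughout $\obartor$. Combining this with Proposition \ref{prop:gen-rig}, a generic $f \in \obartor$ is both $C\toinfty$-rigid and homotopic to the identity; the same statement holds in $\obartormes$.

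Second, I would verify the implication rigid $+$ homotopic to identity $\Rightarrow$ pseudo rotation. Pick a lift $\Tilde{f}$ of $f$, and let $(n_i)$ witness rigidity, so that $f^{n_i} \to \idtor$ in $C\toinfty$. Then each $\Tilde{f}^{n_i}$ equals a translation by some integer vector $v_i \in \ztwo$ composed with a map whose $C^0$-distance to the identity tends to zero. In particular, $\diam(\Tilde{f}^{n_i}(D))$ is bounded in $i$, so $\diam(\Tilde{f}^{n_i}(D)/n_i) \to 0$. By the Misiurewicz--Ziemian theorem the full sequence $\Tilde{f}^n(D)/n$ Hausdorff-converges to $\rho(\Tilde{f})$, and this limit must then coincide with the subsequential limit, which is a singleton. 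So $f$ is a pseudo rotation.

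Putting the two steps together gives the corollary, in both the general and the area-preserving setting. The only subtlety is the appeal to the Misiurewicz--Ziemian existence result to promote the subsequential information to a statement about the whole rotation set; everything else is formal.
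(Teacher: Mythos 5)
Your proof is correct and follows essentially the same approach as the paper, which notes (in the paragraph preceding the corollary) that rigidity gives a subsequence of lifted iterates of the fundamental domain with bounded diameter, forcing the rotation set to be a singleton, and then cites Proposition \ref{prop:gen-rig}. You have simply filled in two details the paper leaves implicit --- that elements of $\obartor$ are homotopic to the identity, and the precise appeal to Misiurewicz--Ziemian to convert bounded subsequential diameters into a singleton rotation set.
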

We can say more about these generic rotation sets. A vector in $\rtwo$ is called \emph{totally Liouvillean} if both its coordinates and its slope are Liouville numbers. A real number $\alpha$ is said to be Liouville if for any $k \in \nstar$, there exists a pair of integers $p,q \in \m Z$ with $q > 1$ such that
\begin{equation*}
    0 < \left|\alpha - \frac{p}{q}\right| < \frac{1}{q^k}.
\end{equation*}
\begin{proposition}
    The rotation vector of a generic diffeomorphism in $\obartor$ (similarly in $\obartormes$) is totally Liouvillean.
\end{proposition}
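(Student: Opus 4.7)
The plan is to run a Baire category argument combining Corollary \ref{cor:gen-pr} with the density of totally Liouvillean vectors in $\rtwo$. For each $k \in \nstar$, introduce the open set
\begin{equation*}
    L_k = \bigcup_{q \geq 2,\, p \in \m Z} \left( \left(\tfrac{p}{q} - \tfrac{1}{q^k},\, \tfrac{p}{q} + \tfrac{1}{q^k}\right) \setminus \left\{\tfrac{p}{q}\right\} \right) \subseteq \m R,
\end{equation*}
which is open and dense because it contains punctured intervals around every rational with denominator at least $2$, and the corresponding subset
\begin{equation*}
    V_k = \{(x,y) \in \rtwo : x \neq 0,\ x \in L_k,\ y \in L_k,\ y/x \in L_k\} \subseteq \rtwo.
\end{equation*}
Since each of the conditions defining $V_k$ cuts out an open dense set (the slope condition being open and dense on $\{x \neq 0\}$), $V_k$ is open and dense in $\rtwo$. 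The intersection $\bigcap_k V_k$ is exactly the set of totally Liouvillean vectors.

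Next I fix a continuous selection of lifts $f \mapsto \tilde f$ (for instance, the lift sending the origin into $[0,1)^2$) and set $U_k = \{f \in \obartor : \rho(\tilde f) \subseteq V_k\}$. Upper semi-continuity of the rotation set from \cite{misiurewicz1989rotation}, combined with the openness of $V_k$, yields that each $U_k$ is open: if $\rho(\tilde f_0) \subseteq V_k$, then every sufficiently $C^\infty$-close $f$ still satisfies $\rho(\tilde f) \subseteq V_k$. For density, I use that conjugates $h R_{\theta_0} h^{-1}$ of translations are dense in $\obartor$ by definition of the Anosov–Katok class, and that each such map has singleton rotation set. Since $V_k \cap (0,1)^2$ is dense in $(0,1)^2$, any target $\theta_0$ can be replaced by some $\theta \in V_k$ arbitrarily close; then $h R_\theta h^{-1}$ is $C^\infty$-close to $h R_{\theta_0} h^{-1}$ by $C^\infty$-continuity of $\theta \mapsto R_\theta$, and lies in $U_k$.

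Consequently $\bigcap_k U_k$ is residual in $\obartor$. Intersecting with the residual set of pseudo-rotations supplied by Corollary \ref{cor:gen-pr}, a generic $f \in \obartor$ is a pseudo-rotation whose (unique) rotation vector lies in $\bigcap_k V_k$, i.e. is totally Liouvillean. The key technical ingredient is the upper semi-continuity of the rotation set, which powers the openness of $U_k$; everything else is a routine Baire category exercise, and the same verbatim argument covers the area-preserving case $\obartormes$.
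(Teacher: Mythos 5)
Your approach is a genuinely different route from the paper's. The paper works entirely inside the residual set $P$ of rigid pseudo-rotations (where continuity of the rotation vector is available), anchors the density argument on the dense set $\tilde Q$ of conjugates of rational rotations, and treats the first coordinate, second coordinate, and slope by three parallel Baire arguments. You instead encode all three Liouville conditions at once into an explicit open dense subset $V_k \subseteq \rtwo$ and pull it back through the rotation-set map using upper semi-continuity, which is cleaner and more modular. The openness argument is fine (a local continuous choice of lift suffices, so the fact that the selection $\tilde f(0) \in [0,1)^2$ jumps at the boundary of the fundamental domain is harmless).

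However, the density argument as written has a genuine gap. You assert that if $\theta \in V_k$ then $h R_\theta h^{-1} \in U_k$, which presumes that the rotation vector of $h R_\theta h^{-1}$ is $\theta$. This is false in general: if $A \in \mathrm{GL}(2,\m Z)$ is the action of $h$ on homology, then the rotation vector of a lift of $h R_\theta h^{-1}$ is $A\theta$ (plus an integer vector depending on the lift), not $\theta$. Since the definition of $\obartor$ allows an arbitrary $h \in \mathrm{Diff}^\infty(\torus)$, $A$ need not be the identity, and $\theta \in V_k$ does not give $A\theta \in V_k$ -- indeed $V_k$ is far from $\mathrm{GL}(2,\m Z)$-invariant because of the slope condition. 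The fix is easy but needs to be said: either choose $\theta$ close to $\theta_0$ with $A\theta + \zeta \in V_k$, which is possible because $\theta \mapsto A\theta + \zeta$ is a homeomorphism and so the preimage of $V_k$ is again open and dense; or observe first that $h R_\theta h^{-1} = (f_A h_0 f_A^{-1}) R_{A\theta}(f_A h_0 f_A^{-1})^{-1}$ when $h = f_A h_0$ with $h_0$ homotopic to the identity, so one may from the start restrict to conjugating maps homotopic to the identity, for which the rotation vector is indeed $\theta$. Note that this is precisely the point the paper's proof handles by passing from $Q$ to $\tilde Q$ and arguing in terms of the rotation vectors $\rho(g)$ of the conjugated maps themselves rather than the translation vectors $\theta$.
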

\begin{proof}
    Let $Q$ be as in equation \eqref{eq:Q}, and for each $k \in \nstar$, let $P_k$ be as in equation \eqref{eq:Pk}, in the proof of Proposition \ref{prop:gen-rig}. Denote $\cap_k P_k$ by $P$. As we saw in Proposition \ref{prop:gen-rig}, $P$ is residual in $\obartor$ and all its elements are rigid, and hence pseudo rotations. The set $Q$ is dense in $\obartor$, and we have $Q \subseteq P$. Note that all rotation vectors of diffeomorphisms in $Q$ are rational. Let $\Tilde{Q}$ denote the set of diffeomorphism in $Q$ whose rotation vectors are not vertical, do not have integer coordinates, or integer slopes. The set $\Tilde{Q}$ is also dense in $\obartor$. \par
    For a diffeomorphism $f \in P$, denote the first coordinate of its rotation vector by $\rho_1(f)$. For a rational number $r$, denote the positive denominator of its simplest form by $\den(r)$. Then, for each $k \in \nstar$, we define
    \begin{equation*}
        U_k = \bigcup_{g \in \Tilde{Q}} \left\{f \in P \mid 0 < |\rho_1(f)-\rho_1(g)|<\frac{1}{\den(\rho_1(g))^k}\right\}.
    \end{equation*}
    A theorem of Misiurewicz and Ziemian \cite{misiurewicz1989rotation} states that the map sending a homeomorphism to its rotation set is upper semi-continuous, so this map is continuous on pseudo rotations. Hence the rotation vector varies continuously on $P$. Thus the sets $U_k$ are open subsets of $P$. By density of $P$ in $\obartor$, we have that for each $k$, any $g \in \Tilde{Q}$ is accumulated by elements of $U_k$. Therefore $U_k$ is dense in $\obartor$ for each $k$. Since $P$ is residual, we conclude that $U = \cap_{k} U_k$ is also residual in $\obartor$. By the definition of Liouville numbers, for any $f \in U$, the first coordinate of the rotation vector $\rho_1(f)$ is Liouvillean, so this property is generic in $\obartor$. We can see in the same way that generically, the second coordinate of the rotation set is also Liouvillean in $\obartor$. For the slope, we can replace $\rho_1(f)$ by the slope of $\rho(f)$ in the definition of $U_k$ and conclude in the same way. Thus we have shown that generically in $\obartor$, the rotation vector is totally Liouvillean. 
\end{proof}
\begin{remark}
    In fact, stronger Liouvillean properties are also generic in this class. Let $\varphi: \nstar \times (\nstar \setminus \set 1) \to \m R_+$ be a function. Then we say a real number $\alpha$ is \emph{$\varphi$-Liouville} if for any $k \in \nstar$, there exists a pair of integers $p,q \in \m Z$ with $q > 1$ such that
    \begin{equation*}
        0 < \left|\alpha - \frac{p}{q}\right| < \frac{1}{\varphi(k,q)}.
    \end{equation*}
    This definition is interesting when the function tends to infinity in both its coordinates. By taking the function $\varphi: (k,q) \mapsto q^k$, we recover the classic Liouville numbers. But in this way we may also describe stronger approximation with rationals by taking for example $\varphi: (k,q) \mapsto e^{kq}$.\par
    By following the above proof and adjusting the definition of $U_k$ we can show that for any $\varphi: \nstar \times (\nstar \setminus \set 1) \to \m R_+$, the rotation vector of a generic diffeomorphism in $\obartor$ (similarly in $\obartormes$) is totally $\varphi$-Liouvillean.
\end{remark}
\section{Generic homothety types of generalized rotation sets in the Anosov-Katok class}
\label{sec:generic-shapes}
In this section we discuss the homothety types of generalized rotation sets of generic elements in the Anosov-Katok class. We only consider homotheties with positive ratio. Let us clarify what we mean by homothety type.
\begin{definition}
\label{def:homothety-type}
    The quotient of the set of non-empty compact subsets of $\rtwo$ by the action of the group generated by homotheties and translations is called the set of \emph{homothety types} of $\rtwo$, and is denoted by $\shapes$. The class of a non-empty compact $K \subseteq \rtwo$ in this quotient is called its \emph{homothety type} and is denoted by $[K]$.
\end{definition}
Our goal is to prove the following theorem:
\begin{theoremb}
    A generic diffeomorphism in $\obartor$ (similarly in $\obartormes$) admits generalized rotation sets of all point-symmetric compact convex homothety types in $\rtwo$.
\end{theoremb}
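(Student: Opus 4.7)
The plan is to prove that for each individual point-symmetric compact convex shape the set of diffeomorphisms realizing it as a generalized rotation set is generic, and then combine this with a countable reduction and a closure argument in the space of homothety types.

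First I would carry out a countability reduction. Each point-symmetric compact convex homothety type in $\shapes$ admits a canonical representative of diameter $1$ centered at the origin; equipped with the Hausdorff metric, the space of such representatives sits inside the compact convex subsets of the closed unit disk and is therefore separable. Fix a countable dense family $\{K_j\}_{j \in \m N}$ of such representatives. It will suffice to show that, for each $j$, the set
\[
\mathcal{G}_j = \{f \in \obartor : f \text{ admits a generalized rotation set of homothety type } [K_j]\}
\]
is residual in $\obartor$, and then to upgrade this by a closure argument.

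For each $j$ I would write $\mathcal{G}_j = \bigcap_{N \in \nstar} V_{j,N}$, where $V_{j,N}$ consists of those $f \in \obartor$ for which there exist $n \geq N$ and $t \in \rtwo$ satisfying $\diam(\tilde{f}^n(D)) > N$ and
\[
d_H\!\left(\frac{\tilde{f}^n(D) - \tilde{f}^n(\tilde{x}_0)}{\diam(\tilde{f}^n(D))} + t,\; K_j\right) < \frac{1}{N}.
\]
Each $V_{j,N}$ is open because $f \mapsto \tilde{f}^n$ is $C^\infty$-continuous at fixed $n$ and all conditions are strict inequalities on continuous functionals; the equality $\mathcal{G}_j = \bigcap_N V_{j,N}$ holds because the $t$ appearing in the definition of $V_{j,N}$ is bounded by the diameter-$1$ normalization, so the data can be passed to a limit. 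The crux is density of each $V_{j,N}$, which I would establish by adapting the Fathi-Herman scheme used by Kocsard and Koropecki in \cite{kocsard2009mixing}. Since $Q = \{\conj{h}{R_{p/q}}\}$ is dense in $\obartor$, it is enough to approximate an arbitrary $g = \conj{h}{R_{p/q}} \in Q$ by an element of $V_{j,N} \cap \obartor$ of the form $\conj{(h \circ k)}{R_{p'/q'}}$, with $k$ a smooth diffeomorphism commuting with $R_{p/q}$ and $p'/q'$ rational with $q'$ large. The commutation guarantees that choosing $|p'/q'-p/q|$ sufficiently small in terms of $\|k\|_{C^\infty}$ keeps the perturbation in any prescribed $C^\infty$-neighborhood of $g$, so all the design freedom lies in $k$. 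I would engineer $k$ to stretch a small subregion of the torus into a macroscopic copy of $K_j$: an appropriately chosen iterate $m \leq q'$ transports a piece of $(hk)^{-1}(D)$ through this region via $R^m_{p'/q'}$, producing an iterate of the perturbation whose image of $D$ has large diameter and normalized shape within $1/N$ of $K_j$ up to translation. The construction can be arranged to be symmetric about the image of $\tilde{x}_0$, yielding point-symmetric limits; this is precisely why the method is restricted to point-symmetric $K_j$.

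Finally, by Baire's theorem $\bigcap_j \mathcal{G}_j$ is residual in $\obartor$, so a generic $f$ admits every $[K_j]$ as a generalized rotation set. A diagonal-extraction argument shows that the set of realizable homothety types of $f$ is closed in $\shapes$: if $[L_k] \to [L_*]$ with representatives $L_k + t_k \to L_*$ in Hausdorff distance, the translations $t_k$ are bounded by the diameter-$1$ normalization; selecting for each $k$ a term of the subsequence defining $L_k$ within Hausdorff distance $1/k$ and extracting where $t_k \to t_\infty$, the diagonal sequence realizes $L_* - t_\infty$ as a generalized rotation set of $f$, so $[L_*]$ is realizable. Density of $\{[K_j]\}$ in the space of point-symmetric compact convex homothety types then forces every such type into the realizable set, giving Theorem B. The main obstacle is the density step: explicitly designing, for each prescribed $K_j$, a commuting $k$ whose combined effect with an iterate $R^m_{p'/q'}$ realizes the shape $K_j$ at a macroscopic scale while keeping $|p'/q'-p/q|$ small enough (depending on $\|k\|_{C^\infty}$) to preserve $C^\infty$-proximity to $g$.
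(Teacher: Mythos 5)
Your overall architecture is essentially the same as the paper's: a countable reduction to a dense family of shapes, decomposition into a countable intersection of open sets, a density argument by approximating smooth conjugates of rational rotations, and a final closure argument to pass from the countable family to all point-symmetric compact convex types. Your sets $V_{j,N}$ play the same role as the paper's $P_k^\Gamma$, and your ``design freedom lies in $k$'' observation corresponds exactly to the paper's Lemma~\ref{lem:two}. Two comments on the countable family: the paper takes $\{K_j\}$ specifically to be rational zonogons, which is not an arbitrary choice --- a rational zonogon $\zon(v_0,\dots,v_{l-1})$ has finitely many sides of rational slope, and this finite combinatorial structure is what makes the spreading construction tractable (one map $\conj{A_i}{J_{\eta_i,\xi}}$ per edge direction). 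An abstract countable dense family would not carry this structure, so you should, in effect, take $\{K_j\}$ to be exactly the rational zonogons. Your heuristic that ``the construction can be arranged to be symmetric about the image of $\tilde{x}_0$, yielding point-symmetric limits'' also misidentifies why point-symmetry appears: it is because the shapes being built are Minkowski sums $[-v_0,v_0]+\dots+[-v_{l-1},v_{l-1}]$, which are intrinsically centrally symmetric, not because the construction is anchored symmetrically at a base point.

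The genuine gap is the density step, which you flag yourself as the ``main obstacle.'' This step is not a deferred technicality --- it is most of the paper (Lemmas~\ref{lem:one}, \ref{lem:two}, \ref{lem:C}, \ref{lem:spread-shape}, Corollary~\ref{cor:induction-step}, and Lemma~\ref{lem:spreading-to-parallelogon}). What your sketch omits: (i) the need to control how conjugation by an arbitrary $g \in \mathrm{Diff}^\infty(\torus)$ (not homotopic to identity) transforms the target shape, which requires first passing to $A \in \mathrm{GL}(2,\mathbb{Z})$ homotopic to $g$ and working with $A\inv(\Gamma)$ via the quantitative stability lemmas \ref{lem:perturbation} and \ref{lem:linear-mapping}; (ii) the device of conjugating by the contraction $C_q$ to produce a diffeomorphism commuting with $R_{(1/q,0)}$ (Lemma~\ref{lem:conj-by-C}), which is the concrete mechanism behind your proposed commuting $k$; and (iii) the actual spreading construction, in which one composes piecewise-affine shear-type maps $J_{\eta,\xi}$ conjugated by integer matrices $A_i$, one per edge direction of the zonogon, and tracks via an explicit induction on segments that the image of a fundamental domain Hausdorff-converges to the zonogon. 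Without at least an outline of how to build the spreading map $k$ achieving a prescribed shape at a macroscopic scale while remaining a small perturbation of $g$ in $C^\infty$, the proposal establishes the scaffolding but not the theorem.
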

Often in proofs using the Anosov-Katok method, the desired property for a diffeomorphism is thought of as an accumulation of countably many approximations of that property. These approximations are usually more manageable, and we can construct conjugates of rotations with these approximate properties. To this end, we introduce the notion of large approximates (see Definiton \ref{def:large-app}) which measure how close a domain is to a certain homothety type. Therefore by understanding how to construct and manipulate large approximates we can approach the problem of forms of generalized rotation sets.\par
In Subsection \ref{sec:fathi-herman}, we explain this in detail, and bring the strategy for proving Theorem \ref{thm:generic-all-symmetric} using the Fathi-Herman method. The core of this proof consists of Lemmas \ref{lem:one} and \ref{lem:two}. The former explores the effects of conjugation by a diffeomorphism on large approximates for the iterates of the fundamental domain by that diffeomorphism. For the most part, this relies on Lemmas \ref{lem:perturbation} and \ref{lem:linear-mapping} which show the effects of perturbations and application of linear maps to large approximates. The proof of Lemma \ref{lem:one} is brought in Subsection \ref{sec:proof-one}.\par
\begin{figure}[ht]
    \centering
    \includegraphics[width=0.8\textwidth]{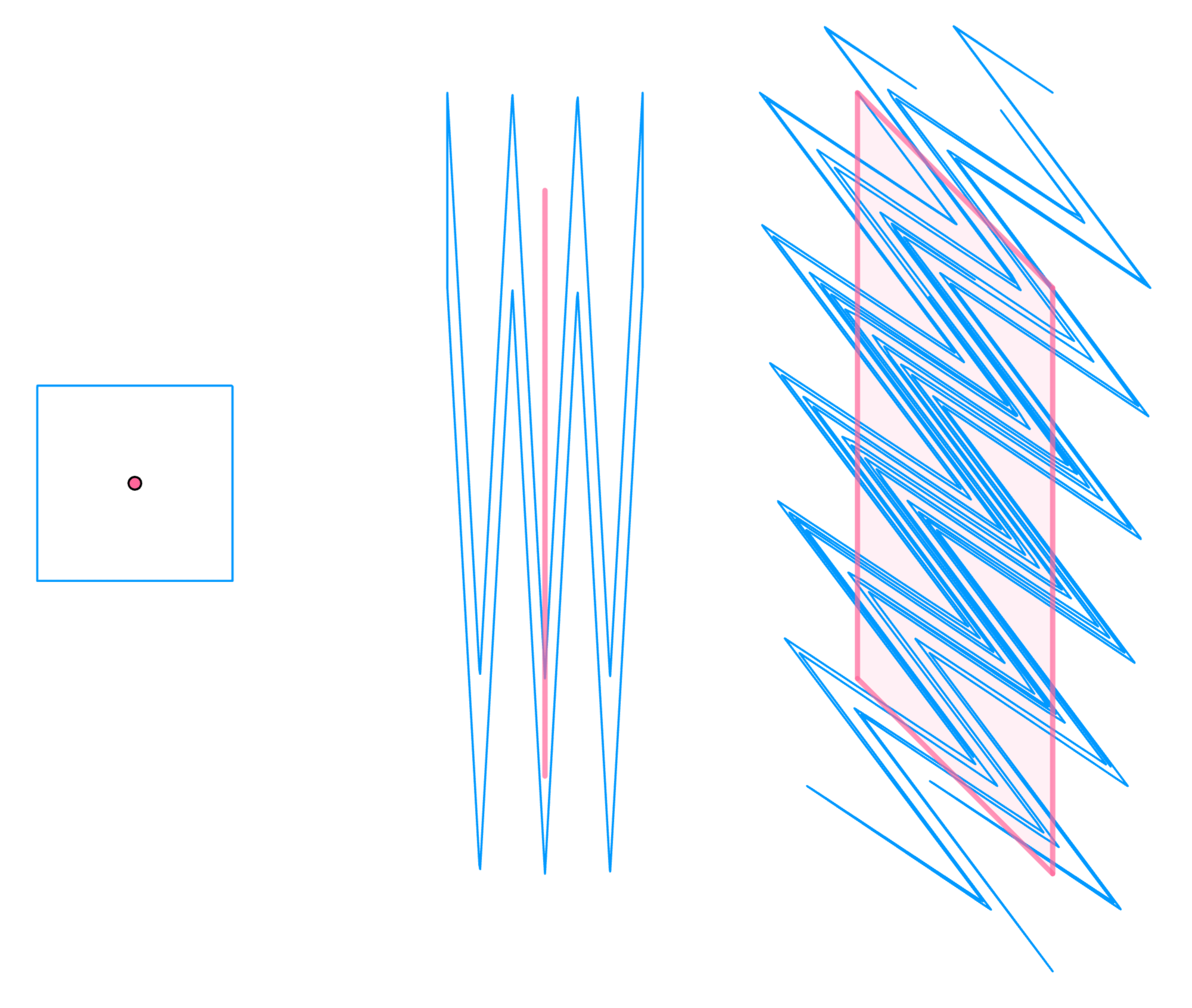}
    \caption{The construction of a homeomorphism spreading a fundamental domain to an approximation of a zonogon}
    \label{fig:construction}
\end{figure}
Any point-symmetric compact convex subset of the plane can be seen as a limit of point-symmetric convex polygons, also known as zonogons. Since there are countably many rational zonogons, we can first restrict ourselves to such forms and deduce the general result from this one. In Lemma \ref{lem:two}, we construct a conjugate of a rigid translation that commutes with a fixed rational translation, and such that an iterate of the fundamental domain by one of its lifts is a sufficiently accurate large approximate of a zonogon. The last section of this article is dedicated to this construction. The main idea is to work with the product of homeomorphisms which spread domains in the directions of edges of the zonogon. Figure \ref{fig:construction} shows how this works.

\subsection{A few definitions and notations}
\label{sec:def-not}
Denote the Hausdorff distance by $d_H$. The set of non-empty compact subsets of $\rtwo$ endowed with this distance is a complete metric space.
\subsubsection{Large approximates}
Let us give a sense to a compact approximating a homothety type.
\begin{definition}
\label{def:large-app}
    Let $r>0$, $\Gamma \in \shapes$, and $K$ a non-empty compact subset of $\rtwo$. Then $K$ is said to be an \emph{$r$-large approximate} of $\Gamma$ if $\diam(K) > r$, and if there exists $\hat{\Gamma} \in \Gamma$ of diameter $1$ such that
    \begin{equation*}
        d_H(\frac{K}{\diam(K)},\hat{\Gamma}) < \frac{1}{r}.
    \end{equation*}
    The set of all $r$-large approximates of $\Gamma$ is denoted by $\la{r}{\Gamma}$.
\end{definition}
The following lemma shows that in a sense large approximates are stable under small perturbations, up to losing some approximation accuracy.
\begin{lemma}
\label{lem:perturbation}
    Let $r > 0$, and $d_0 > 0$. Then there exists $s_0$ such that for all $s \geq s_0 $, for any $\Gamma \in \shapes$ and $K,K' \subseteq \rtwo$ non-empty compact subsets such that $d_H(K,K') < d_0$, if $\isla{K}{s}{\Gamma}$, then $\isla{K'}{r}{\Gamma}$.
\end{lemma}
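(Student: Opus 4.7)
The plan is a direct perturbation estimate: rescaling by the diameters should turn the small Hausdorff perturbation of $K$ into a controllably small Hausdorff perturbation of the normalized set $K/\diam(K)$, after which the triangle inequality combined with the hypothesis that $K/\diam(K)$ approximates some $\hat{\Gamma} \in \Gamma$ of diameter $1$ should finish the job.

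First I would handle the diameter condition. Since $d_H$ controls the difference of diameters, $|\diam(K) - \diam(K')| \leq 2\, d_H(K,K') < 2 d_0$, so from $\diam(K) > s$ one gets $\diam(K') > s - 2 d_0$, which exceeds $r$ as soon as $s_0 \geq r + 2 d_0$. Next, fix a representative $\hat{\Gamma} \in \Gamma$ of diameter $1$ with $d_H(K/\diam(K), \hat{\Gamma}) < 1/s$. The idea is to show that this same $\hat{\Gamma}$ can serve as the approximating shape for $K'$, which by the triangle inequality reduces the task to bounding $d_H(K/\diam(K), K'/\diam(K'))$.

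For this last estimate I would first translate both $K$ and $K'$ by the same vector so that a fixed $x_0 \in K$ sits at the origin; this is legitimate because the Hausdorff distance is translation-invariant and because any translate of $\hat{\Gamma}$ still represents $\Gamma$. Under this normalization $K \subseteq B(0, \diam(K))$ and $K' \subseteq B(0, \diam(K) + d_0)$. For $y \in K$ and any $y' \in K'$ with $|y - y'| < d_0$, write
\[
\left| \frac{y}{\diam(K)} - \frac{y'}{\diam(K')} \right| \leq \frac{|y - y'|}{\diam(K)} + |y'| \cdot \left| \frac{1}{\diam(K)} - \frac{1}{\diam(K')} \right|.
\]
The first summand is at most $d_0/s$, and using $|y'| \leq \diam(K) + d_0$ together with $|\diam(K) - \diam(K')| < 2 d_0$ and $\diam(K), \diam(K') \geq s - 2 d_0$, the second summand is also $O(d_0/s)$. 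A symmetric estimate with the roles of $K$ and $K'$ exchanged yields $d_H(K/\diam(K), K'/\diam(K')) \leq C d_0 / s$ for a universal constant $C$.

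Combining these bounds gives $d_H(K'/\diam(K'), \hat{\Gamma}) \leq (C d_0 + 1)/s$, which is $< 1/r$ as soon as $s \geq r(C d_0 + 1)$. Setting $s_0$ a little above $\max(r + 2d_0,\, r(C d_0 + 1))$ to absorb the strict inequalities completes the argument. There is no real conceptual obstacle here; the only subtle point is that one must translate to a common bounded region before normalizing, because the two rescalings $1/\diam(K)$ and $1/\diam(K')$ act on points whose magnitude depends on the ambient positioning of the sets.
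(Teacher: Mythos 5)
Your argument is correct and follows essentially the same strategy as the paper: fix a diameter-one representative $\hat\Gamma$, control the diameter discrepancy via $|\diam(K)-\diam(K')|\le 2\,d_H(K,K')<2d_0$, translate to a common bounded frame so that the two rescalings $1/\diam(K)$ and $1/\diam(K')$ differ only by a controlled amount, and finish with the triangle inequality. The only cosmetic difference is that you translate a point of $K$ to the origin whereas the paper translates a point $x_0\in\hat\Gamma$ and compares $\delta_K\hat\Gamma$ with $\delta_{K'}\hat\Gamma+\nu$; one small point you glossed over is that making your constant $C$ genuinely independent of $d_0$ requires $s_0$ to also exceed a fixed multiple of $d_0$, which is trivially absorbed into your final choice of $s_0$.
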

\begin{proof}
    Fix any $s \geq 2d_0 + r + 3d_0 r$. Let $\Gamma \in \shapes$, and $K,K'$ two compact subsets of $\rtwo$ with Hausdorff distance less than $d_0$. Denote the diameters of $K$ and $K'$ by $\delta_K$ and $\delta_{K'}$ respectively. So $|\delta_K-\delta_{K'}| < 2d_0$. Assuming $\isla{K}{s}{\Gamma}$, we have $\delta_K>s$, and there exists $\hat{\Gamma} \in \Gamma$ of diameter $1$ such that $d_H(K/\delta_K,\hat{\Gamma})<{1}/{s}$, or equivalently,
    \begin{equation}
    \label{eq:hyp-K}
        d_H(K,\delta_K\hat{\Gamma})<{\delta_K}/{s}.
    \end{equation}
    Choose an arbitrary point $x_0$ in $\hat{\Gamma}$. The set $\hat{\Gamma} - x_0$ is included in the closed ball of radius $1$ around the origin. Therefore,
    \begin{equation*}
        d_H(\delta_K(\hat{\Gamma}-x_0),\delta_{K'}(\hat{\Gamma}-x_0)) \leq |\delta_K-\delta_{K'}| < 2d_0.
    \end{equation*}
    Denote the vector $- \delta_{K'}x_0 + \delta_Kx_0$ by $\nu$. Then by translating the above sets with vector $\delta_Kx_0$, we get
    \begin{equation}
    \label{eq:nu}
        d_H(\delta_K\hat{\Gamma},\delta_{K'}\hat{\Gamma} + \nu) < 2d_0.
    \end{equation}
    By the triangle inequality, we have
    \begin{equation*}
        d_H(K',\delta_{K'}\hat{\Gamma}+\nu) \leq d_H(K',K) + d_H(K,\delta_K\hat{\Gamma}) + d_H (\delta_K\hat{\Gamma},\delta_{K'}\hat{\Gamma}+\nu).
    \end{equation*}
    Therefore by equations \eqref{eq:hyp-K} and \eqref{eq:nu},
    \begin{equation}
    \label{eq:first}
        d_H(K',\delta_{K'}\hat{\Gamma}+\nu)< \frac{\delta_K}{s}+3d_0.
    \end{equation}
    
    Given that $|\delta_K-\delta_{K'}| < 2d_0$,
    \begin{equation}
    \label{eq:second}
        \delta_{K'} > \delta_K - 2d_0 > s - 2d_0 > r.
    \end{equation}
    On the other hand, by multiplying the inequalities $s < \delta_K$ and ${2d_0+3d_0 r \leq s-r}$, we get $s(2d_0+3d_0 r) < \delta_K(s-r)$. By adding ${\delta_Kr-2sd_0}$ to the sides of this inequality we have:
    \begin{equation*}
        r\delta_K+3sd_0 r < s\delta_K-2sd_0.
    \end{equation*}
    Dividing this inequality by $rs$ and using equation \eqref{eq:second} gives:
    \begin{equation*}
        \frac{\delta_K}{s} + 3d_0 < \frac{\delta_K-2d_0}{r} < \frac{\delta_{K'}}{r}.
    \end{equation*}
    Then by equation \eqref{eq:first},
    \begin{equation*}
    d_H(K',\delta_{K'}\hat{\Gamma}+\nu) < \frac{\delta_{K'}}{r}
        \implies d_H(K'/\delta_{K'},\hat{\Gamma}+(\nu/\delta_{K'})) < \frac{1}{r}.
    \end{equation*}
    The set $\hat{\Gamma}+(\nu/\delta_{K'})$ has diameter $1$ and homothety type $\Gamma$, so $\isla{K'}{r}{\Gamma}$.
\end{proof}
Remember that the operator norm of $A$, a linear mapping of $\rtwo$, can be calculated as follows: $$\opnorm{A} = \sup_{v \in \rtwo \setminus \set{0}} \frac{|A(v)|}{|v|}.$$
\begin{lemma}
\label{lem:linear-mapping}
    Let $r>0$, and $A \in \mrm{GL}(2,\m R)$. Then there exists $s_0$ such that for all $s \geq s_0 $, for any $\Gamma \in \shapes$ and any non-empty compact $K \subseteq \rtwo$ in $\la{s}{\Gamma}$, we have $\isla{A(K)}{r}{A(\Gamma)}$.
\end{lemma}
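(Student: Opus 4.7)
The plan is to push the approximation property for $K$ forward by $A$, while carefully tracking how $A$ distorts diameters and Hausdorff distances. Set $\alpha := \opnorm{A}$ and $\beta := \opnorm{A\inv}\inv$, both strictly positive. For any compact $X, Y \subseteq \rtwo$ we then have
\[
\beta\,\diam(X) \leq \diam(A(X)) \leq \alpha\,\diam(X), \qquad d_H(A(X), A(Y)) \leq \alpha\, d_H(X, Y).
\]

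Given $\isla{K}{s}{\Gamma}$ with a diameter-$1$ witness $\hat\Gamma \in \Gamma$, the natural candidate diameter-$1$ representative of $A(\Gamma)$ is $\hat{\Gamma}' := A(\hat\Gamma)/\delta$, where $\delta := \diam(A(\hat\Gamma)) \in [\beta, \alpha]$. Write $d := \diam(K) > s$ and $D := \diam(A(K)) \geq \beta d$, and let $P := A(K)/D$, a diameter-$1$ set. Applying $A$ to the defining inequality $d_H(K/d, \hat\Gamma) < 1/s$ yields $d_H(A(K)/d, A(\hat\Gamma)) < \alpha/s$, which after dividing through by $\delta$ reads
\[
d_H(\mu P,\, \hat{\Gamma}') < \frac{\alpha}{s\delta} \leq \frac{\alpha}{s\beta}, \qquad \mu := \frac{D}{d\delta}.
\]

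Comparing the diameters of the two sides of this last inequality gives $|\mu - 1| < 2\alpha/(s\beta)$. Since $\hat{\Gamma}'$ has diameter $1$, picking any $p_0 \in P$ and translating $\mu P$ by $(1 - \mu)p_0$ brings it within Hausdorff distance $|1 - \mu|$ of $P$ (this is immediate from $P - p_0 \subseteq \overline{B}(0, 1)$). Combining with the previous estimate by the triangle inequality, the translate $\hat{\Gamma}'' := \hat{\Gamma}' + (1-\mu)p_0$, which still lies in $A(\Gamma)$ and has diameter $1$, satisfies $d_H(P, \hat{\Gamma}'') < 3\alpha/(s\beta)$. Choosing $s_0 := \max(r/\beta,\, 3\alpha r/\beta)$ then ensures simultaneously $\diam(A(K)) > \beta s_0 \geq r$ and $d_H(P, \hat{\Gamma}'') < 1/r$, giving $\isla{A(K)}{r}{A(\Gamma)}$.

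The only subtle point is that naively applying $A$ to the raw bound $d_H(K, d\hat\Gamma) < d/s$ produces a perturbation $\alpha d/s$ that grows with $d$, so Lemma \ref{lem:perturbation} cannot be invoked as a black box. Normalizing by $D \geq \beta d$ absorbs this factor of $d$ at the price of introducing the near-unit scaling factor $\mu$, which must be controlled by the diameter-comparison argument; the rest is routine bookkeeping with $\alpha$ and $\beta$.
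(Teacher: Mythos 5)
Your proof is correct and follows essentially the same route as the paper: push forward by $A$, use $\opnorm{A}$ and $\opnorm{A\inv}$ to control the distortion of diameters and Hausdorff distances, compare the diameters of the two nearby sets to bound the near-unit scaling discrepancy, and absorb that discrepancy with a translation. The only cosmetic difference is that you normalize to diameter $1$ early and make the translating vector explicit as $(1-\mu)p_0$, whereas the paper works with the unnormalized sets and writes only ``an appropriate vector $\nu$''; the resulting threshold $s_0 = \opnorm{A\inv}\, r\cdot\max\{3\opnorm{A},1\}$ is the same in both.
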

\begin{proof}
    Fix any $s \geq r \opnorm{A\inv} \cdot \max \set{3 \opnorm{A},1}$. Let $\Gamma \in \shapes$, and $K$ a compact subset of $\rtwo$. Denote the diameters of $K$ and $A(K)$ by $\delta_K$ and $\delta_{A(K)}$ respectively. If $\isla{K}{s}{\Gamma}$, then $\delta_K>s$, and there exists $\hat{\Gamma} \in \Gamma$ of diameter $1$ such that $d_H(K,\delta_K\hat{\Gamma})<{\delta_K}/{s}$. Denote the diameter of $A(\hat{\Gamma})$ by $\delta_{A(\hat{\Gamma})}$. \par
    By compactness of $K$, there exist $x,y \in K$ such that $|x-y|=\delta_K$. So,
    \begin{equation}
    \label{eq:fifth}
        \opnorm{A\inv}\inv\cdot\delta_K = \opnorm{A\inv}\inv\cdot |x-y| \leq |A(x-y)| = |A(x)-A(y)| \leq \delta_{A(K)}.
    \end{equation}
    Hence, $\delta_{A(K)} \geq \opnorm{A\inv}\inv\cdot\delta_K > \opnorm{A\inv}\inv\cdot s \geq r$. \par
    As $d_H(K,\delta_K\hat{\Gamma})<{\delta_K}/{s}$, for any $x \in K$, there exists $y \in \delta_K\hat{\Gamma}$ such that $|x-y|<{\delta_K}/{s}$, which implies $|A(x)-A(y)|<\opnorm{A}{\delta_K}/{s}$. Hence $A(K)$ is in the $(\opnorm{A}{\delta_K}/{s})$-neighborhood of $A(\delta_K\hat{\Gamma})=\delta_KA(\hat{\Gamma})$. Similarly, the reciprocal is also true. Therefore,
    \begin{equation}
    \label{eq:third}
        d_H(A(K),\delta_KA(\hat{\Gamma})) < \opnorm{A}\frac{\delta_K}{s}.
    \end{equation}
    This implies in particular that $|\delta_K\delta_{A(\hat{\Gamma})}-\delta_{A(K)}|<2\opnorm{A}{\delta_K}/{s}$. Therefore for an appropriate vector $\nu$:
    \begin{multline}
    \label{eq:fourth}
    d_H(\delta_K{A(\hat{\Gamma})}, \delta_{A(K)} \left(\tfrac{A(\hat{\Gamma})}{\delta_{A(\hat{\Gamma})}}\right)+\nu) = \\
    d_H(\delta_K\delta_{A(\hat{\Gamma})}\left(\tfrac{A(\hat{\Gamma})}{\delta_{A(\hat{\Gamma})}}\right), \delta_{A(K)} \left(\tfrac{A(\hat{\Gamma})}{\delta_{A(\hat{\Gamma})}}\right)+\nu)<2\opnorm{A}\frac{\delta_K}{s}.
    \end{multline}
    So, by the triangle inequality and equations \eqref{eq:fifth}, \eqref{eq:third} and \eqref{eq:fourth},
    \begin{equation*}
        d_H(A(K),\delta_{A(K)} \left(\tfrac{A(\hat{\Gamma})}{\delta_{A(\hat{\Gamma})}}\right)+\nu)<3\opnorm{A}\frac{\delta_K}{s} \leq 3\opnorm{A}\frac{\delta_K}{3r \opnorm{A}\opnorm{A\inv}} \leq \frac{\delta_{A(K)}}{r}.
    \end{equation*}
    Hence $\isla{A(K)}{r}{A(\Gamma)}$.
\end{proof}

\subsubsection{Zonogons}
\begin{definition}
    A point-symmetric convex polygon is called a \emph{zonogon}. A zonogon is called \emph{rational} if its vertices are rational points of the plane.
\end{definition}
\begin{figure}[ht]
    \centering
    \begin{subfigure}{0.4\textwidth}
        \centering
        \includegraphics[width=\textwidth]{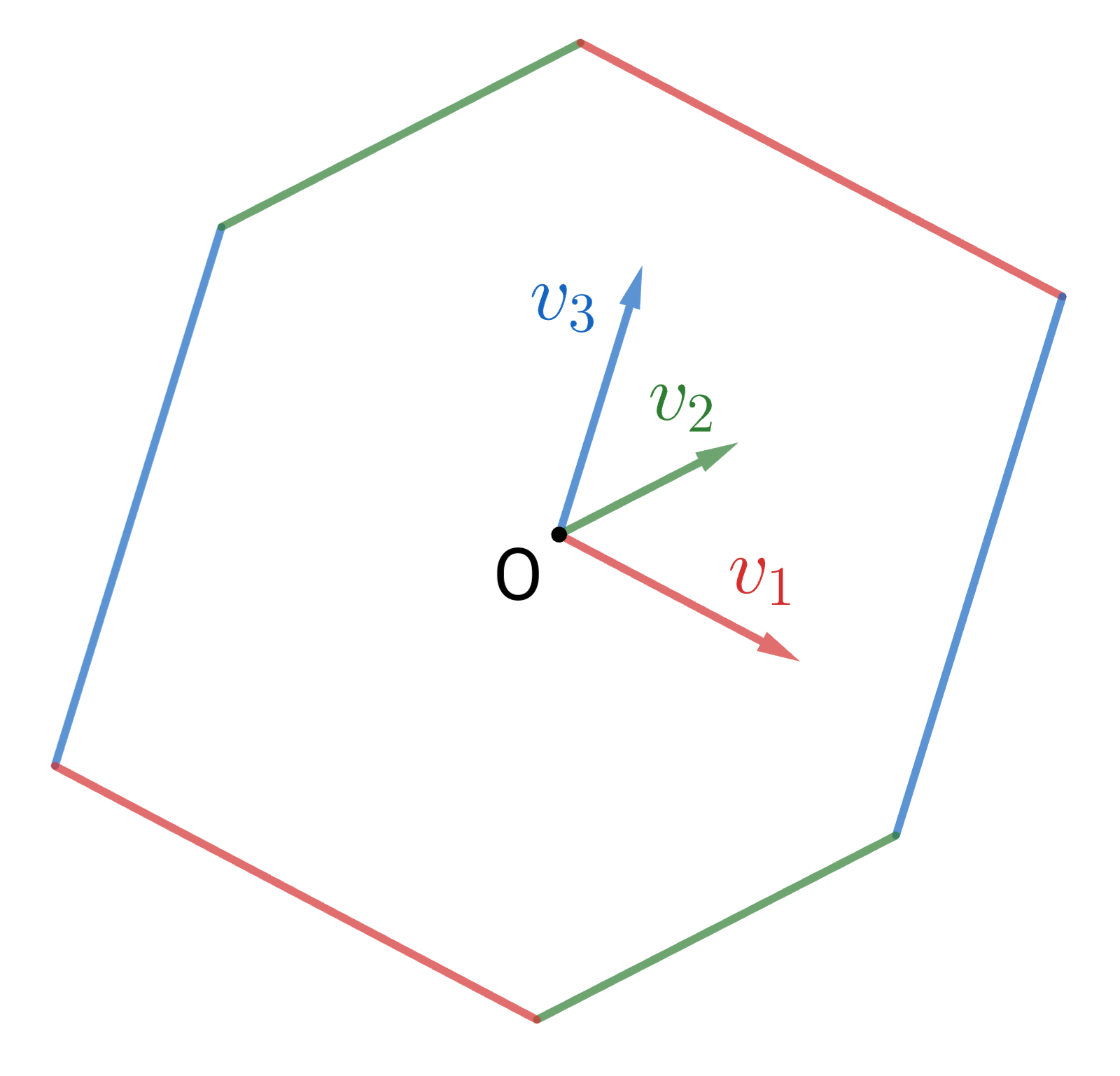}
        \caption{A picture of $\zon(v_1,v_2,v_3)$}
        \label{fig:zonogon}
    \end{subfigure}
    \begin{subfigure}{0.4\textwidth}
        \centering
        \includegraphics[width=\textwidth]{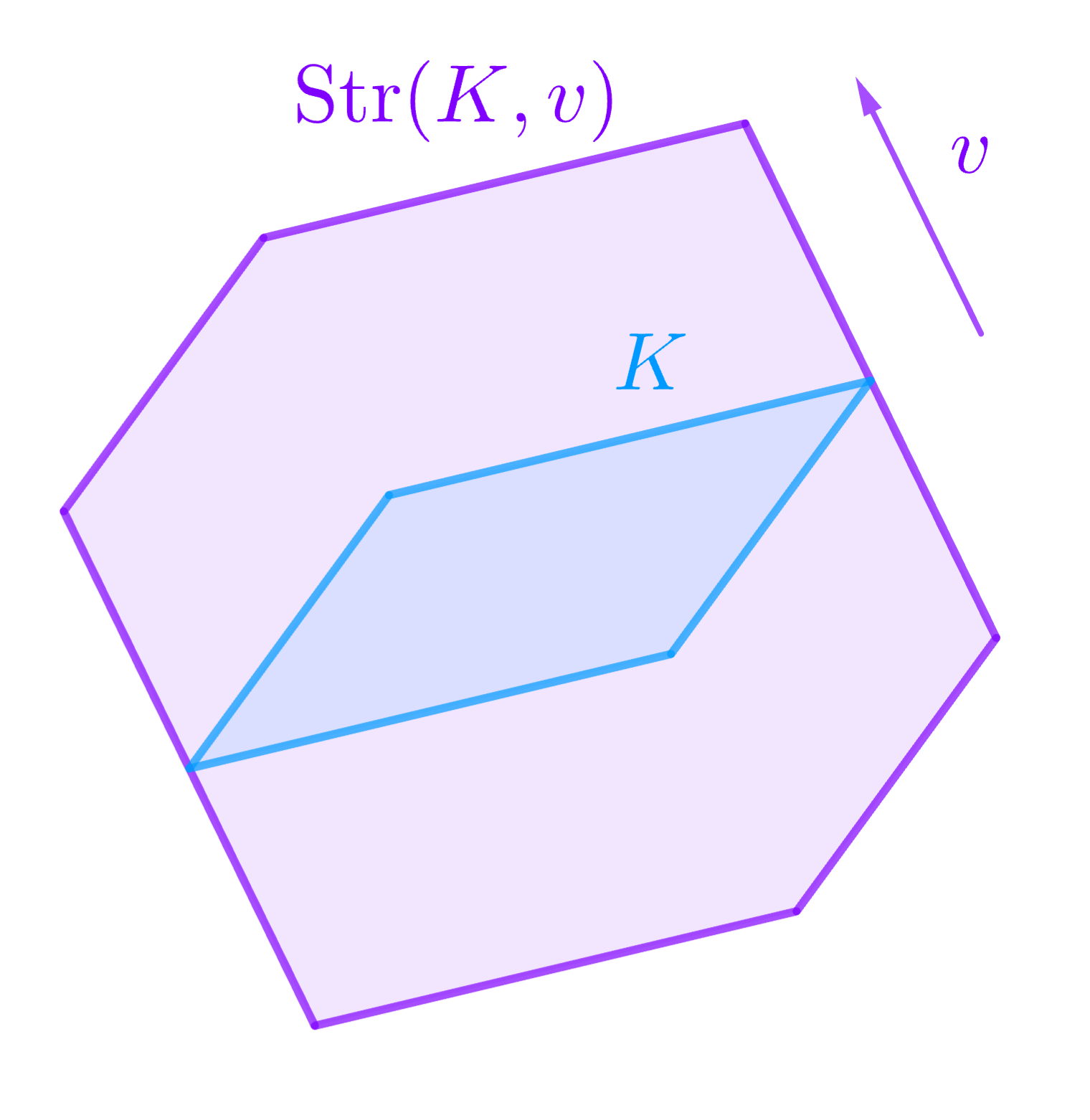}
        \caption{A set $K$ and its stretch in direction $v$}
        \label{fig:stretch}
    \end{subfigure}
        \caption{}
\end{figure}
For the purpose of this paper, we accept non-trivial segments as zonogons. A zonogon which is not a segment has evenly many sides, and the opposite sides are parallel and have equal length.\par
For any $v,w \in \rtwo$, the line segment joining $v$ and $w$ is denoted by $[v,w]$. Let $v_1,\dots,v_l$ be non collinear vectors in $\rtwo$. The Minkowski sum $$[-v_1,v_1]+\dots+[-v_l,v_l]$$ is a zonogon. Denote this zonogon as $\zon(v_1,\dots,v_l) \subseteq \rtwo$.
\begin{definition}
    Let $X \subseteq \rtwo$, and $v \in \rtwo \setminus \set{0}$. The Minkowski sum $X + [-v,v]$ is denoted by $\stre(X,v)$.
\end{definition}

\subsubsection{Construction of diffeomorphisms commuting with $R_{(1/q,0)}$}
For $q \in \nstar$, let $C_q$ denote the matrix $\begin{pmatrix}
    {1}/{q} & 0 \\
    0 & 1
\end{pmatrix}$. This linear map contracts the horizontal direction by a factor of $q$. The following lemma shows how we may use conjugation by $C_q$ to obtain a torus diffeomorphism commuting with the horizontal translation $R_{(1/q,0)}$ starting from a diffeomorphism homotopic to identity.
\begin{lemma}
\label{lem:conj-by-C}
    Let $h$ be a torus diffeomorphism that is homotopic to identity, and $q \in \nstar$. Then for $\Tilde{h}$ a lift of $h$, the plane diffeomorphism $\conj{C_q}{\Tilde{h}}$ quotients down to a torus diffeomorphism homotopic to identity, denoted with abuse of notation by $\conj{C_q}{h}$. Additionally, $\conj{C_q}{h}$ commutes with the translation map $R_{(1/q,0)}$.
\end{lemma}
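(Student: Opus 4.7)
The plan is to verify both claims by direct computation on the lift $\tilde g := C_q \tilde h C_q\inv$, exploiting the fact that although $C_q$ does not preserve $\ztwo$, its inverse $C_q\inv \colon (n,m) \mapsto (qn,m)$ does send $\ztwo$ into itself.

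First I would show that $\tilde g$ is $\ztwo$-equivariant, which is what is needed both for it to descend to a torus diffeomorphism and for that descent to be homotopic to the identity. Since $h$ is homotopic to the identity, $\tilde h$ commutes with integer translations. For any $v = (v_1, v_2) \in \ztwo$ we have $C_q\inv v = (qv_1, v_2) \in \ztwo$, so
\begin{equation*}
    \tilde g(x+v) = C_q \tilde h(C_q\inv x + C_q\inv v) = C_q\bigl(\tilde h(C_q\inv x) + C_q\inv v\bigr) = \tilde g(x) + v.
\end{equation*}
Smoothness and invertibility of $\tilde g$ follow from those of $\tilde h$ together with $C_q \in \mrm{GL}(2,\m R)$, so the descended map $\conj{C_q}{h}$ is a $C\toinfty$ diffeomorphism of $\torus$ homotopic to the identity.

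Next I would check the commutation with $R_{(1/q, 0)}$. Denote by $T_w$ the plane translation by a vector $w \in \rtwo$. The key observation is the identity $T_{(1/q,0)} = C_q \, T_{(1,0)} \, C_q\inv$, which holds because $C_q$ sends $(1,0)$ to $(1/q,0)$. The $\ztwo$-equivariance of $\tilde h$ applied at $v = (1,0)$ gives $\tilde h \, T_{(1,0)} = T_{(1,0)} \, \tilde h$; conjugating this identity by $C_q$ yields $\tilde g \, T_{(1/q,0)} = T_{(1/q,0)} \, \tilde g$, and projecting to the torus delivers the desired commutation $\conj{C_q}{h} \circ R_{(1/q,0)} = R_{(1/q,0)} \circ \conj{C_q}{h}$.

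This lemma is essentially algebraic bookkeeping, so I do not expect a real obstacle. The one point to track carefully is the asymmetry between $C_q\inv$, which embeds $\ztwo$ into $\ztwo$, and $C_q$, which sends $\ztwo$ into the strictly larger lattice $(1/q)\m Z \times \m Z$; this asymmetry is precisely what produces commutation with the finer translation $R_{(1/q, 0)}$ rather than only with $R_{(1,0)}$, and it also explains the ``abuse of notation'' warning, since the precise torus map $\conj{C_q}{h}$ depends on the choice of lift $\tilde h$ (different lifts differing by $v \in \ztwo$ give descents that differ by the torus translation by the class of $C_q v$).
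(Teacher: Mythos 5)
Your proposal is correct and is essentially the same computation as the paper's: you exploit the $\ztwo$-equivariance of $\tilde h$ together with the fact that $C_q\inv$ maps $(1/q)\m Z \times \m Z$ into $\ztwo$. The only cosmetic difference is that the paper checks $\ztwo$-equivariance and commutation with $T_{(1/q,0)}$ in a single unified computation (verifying directly that $\conj{C_q}{\Tilde h}$ commutes with translation by any vector in $(1/q)\m Z \times \m Z$), whereas you separate these into two steps; your closing observation correctly pins down the dependence on the choice of lift that the paper glosses as an abuse of notation.
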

\begin{proof}
\begin{figure}[ht]
    \centering
    \includegraphics[width=0.7\textwidth]{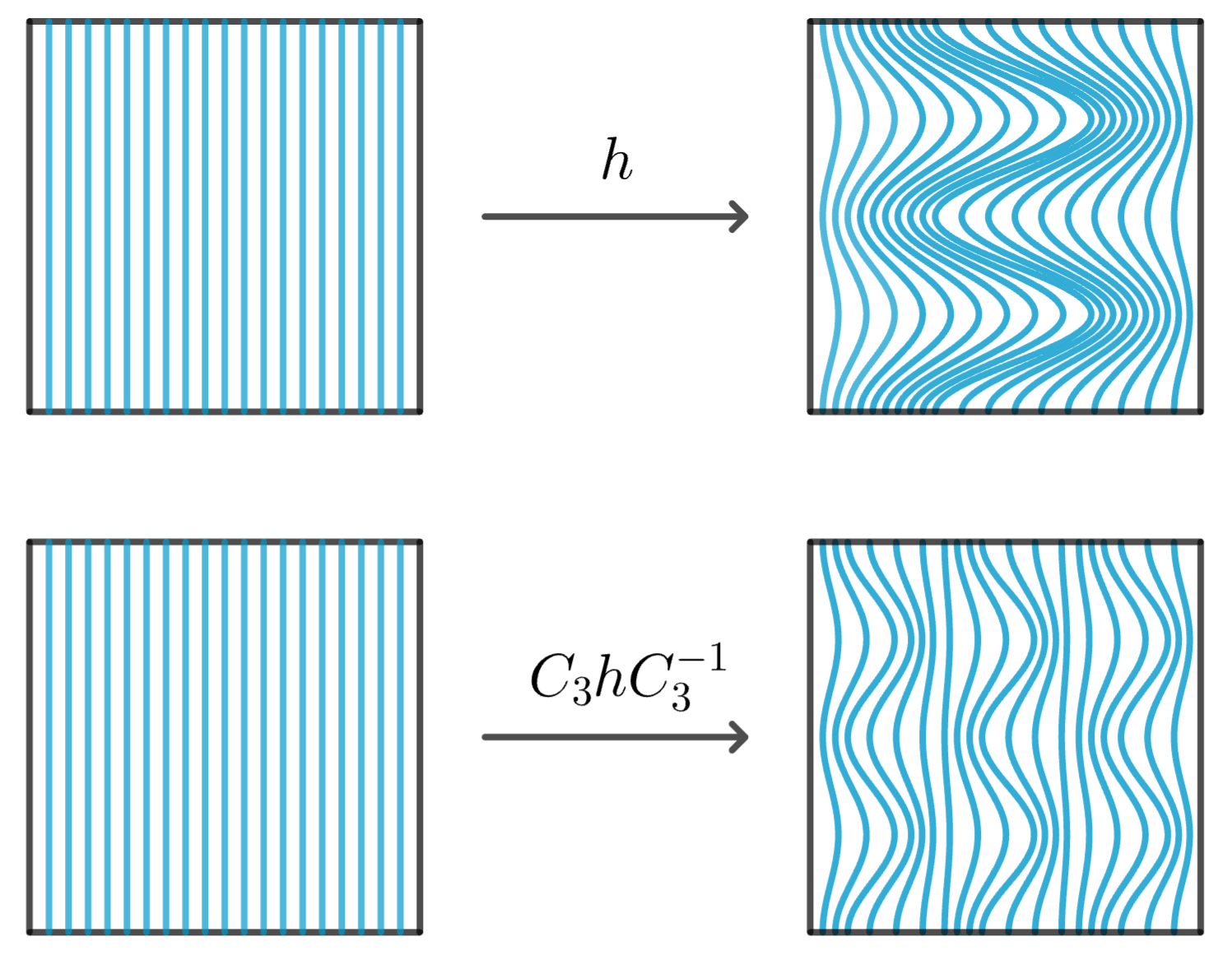}
    \caption{}
    \label{fig:commute}
\end{figure}
    Define $H \coloneqq \conj{C_q}{\Tilde{h}}$. The map $h$ is homotopic to identity, hence $\Tilde{h}$ commutes with integer translations. Therefore, for any $(x_1,x_2) \in \rtwo$, and $(\zeta_1,\zeta_2) \in \ztwo$:
    \begin{align*}
        H(x_1+\frac{\zeta_1}{q},x_2+\zeta_2)
        &= \conj{C_q}{\Tilde{h}}(x_1+\frac{\zeta_1}{q},x_2+\zeta_2) \\
        &= C_q \Tilde{h} (C_q\inv (x_1,x_2) + (\zeta_1,\zeta_2)) \\
        &= C_q(\Tilde{h} C_q\inv (x_1,x_2) + (\zeta_1,\zeta_2)) \\
        &= \conj{C_q}{\Tilde{h}}(x_1,x_2) + (\frac{\zeta_1}{q},\zeta_2) \\
        &= H(x_1,x_2) + (\frac{\zeta_1}{q},\zeta_2).
    \end{align*}
    So $H$ is a homeomorphism of $\rtwo$ which commutes with integer translations and $\Tilde{R}_{(1/q,0)}$. Hence it quotients down to a torus homeomorphism homotopic to identity and commuting with $R_{(1/q,0)}$.
\end{proof}

\subsection{The Fathi-Herman method and the proof}
\label{sec:fathi-herman}
\begin{sloppypar}
    Fix $D = [0,1]^2$, and $\Tilde{x}_0 \in D$. For each $k \in \nstar$ and $\Gamma \in \shapes$, we define ${P_k^{\Gamma} \subseteq \obartor}$ as follows:
\end{sloppypar}
\begin{equation*}
    P_k^{\Gamma} \coloneqq \set{f \in \obartor | \exists n \in \nstar : \isla{\Tilde{f}^n(D)}{k}{\Gamma}},
\end{equation*}
where $\Tilde{f}$ denotes a lift of $f$. Note that the condition $\isla{\Tilde{f}^n(D)}{k}{\Gamma}$ remains unchanged no matter which lift is chosen. Denote $P^{\Gamma} = \bigcap_{k \geq 1} P_k^{\Gamma}$. \par
\begin{lemma}
    Let $f$ be a diffeomorphism in $P^{\Gamma}$. Then $\Gamma$ is the homothety type of a generalized rotation set of $f$.
\end{lemma}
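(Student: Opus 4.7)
The plan is to unpack the hypothesis $f \in P^{\Gamma}$ into a subsequence of iterates whose rescaled, recentered versions converge (via Blaschke selection) to a diameter-one set in the class $\Gamma$, and then read off from the construction that this limit is exactly a generalized rotation set in the sense of Definition \ref{def:sublin-rot-set}.

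Concretely, I would first use $f \in \bigcap_{k} P_k^{\Gamma}$ to pick, for each $k \in \nstar$, an integer $n_k$ and a diameter-one representative $\hat{\Gamma}_k \in \Gamma$ such that, writing $\delta_k := \diam(\Tilde{f}^{n_k}(D))$, we have $\delta_k > k$ and
$$d_H\!\left(\frac{\Tilde{f}^{n_k}(D)}{\delta_k},\, \hat{\Gamma}_k\right) < \frac{1}{k}.$$
Setting $y_k := \Tilde{f}^{n_k}(\Tilde{x}_0)/\delta_k$, the shifted normalized sets $K_k' := \Tilde{f}^{n_k}(D)/\delta_k - y_k$ contain the origin and have diameter exactly $1$, hence all lie in the closed ball of radius $1$ around the origin. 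The hyperspace of non-empty compact subsets of this ball is compact under the Hausdorff metric (Blaschke selection), so some subsequence $(K_{k_j}')_j$ converges in $d_H$ to a compact set $K^*$, which still has diameter $1$ by continuity of the diameter functional.

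Next I would identify $[K^*]$. Fix once and for all a diameter-one representative $\hat{\Gamma}$ of $\Gamma$; because any two diameter-one representatives of a homothety type differ by a translation (positive homotheties preserve diameter), we can write $\hat{\Gamma}_k = \hat{\Gamma} + t_k$ for some $t_k \in \rtwo$. Translation-invariance of $d_H$ gives $d_H(K_{k_j}',\, \hat{\Gamma} + t_{k_j} - y_{k_j}) < 1/k_j$, so $\hat{\Gamma} + (t_{k_j} - y_{k_j})$ also converges to $K^*$. Picking any $p \in \hat{\Gamma}$, the translates $p + t_{k_j} - y_{k_j}$ lie in a $d_H$-convergent sequence of sets and therefore form a bounded sequence in $\rtwo$; after a further subsequential extraction, $t_{k_j} - y_{k_j} \to s \in \rtwo$, forcing $K^* = \hat{\Gamma} + s$ and hence $[K^*] = \Gamma$. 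Combined with $\delta_{k_j} > k_j \to \infty$ and the identity $K_{k_j}' = (\Tilde{f}^{n_{k_j}}(D) - \Tilde{f}^{n_{k_j}}(\Tilde{x}_0))/\delta_{k_j}$, this exhibits $K^*$ as a generalized rotation set of $f$ of homothety type $\Gamma$.

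The only real subtlety is that the representatives $\hat{\Gamma}_k$ furnished by the definition of $\la{k}{\Gamma}$ are only pinned down up to translation, so nothing a priori prevents their ``centers'' $t_k$ from drifting off to infinity; this is why one cannot conclude $K^* \in \Gamma$ directly from $K_{k_j}' \to K^*$ and must perform a second compactness extraction on the sequence $t_{k_j} - y_{k_j}$. Once that recentering bookkeeping is handled, the remainder is a triangle-inequality computation and an appeal to the Blaschke compactness of compact subsets of a fixed ball.
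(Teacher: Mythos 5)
Your argument is correct and takes essentially the same route as the paper: both normalize and recenter the iterates $\Tilde{f}^{n_k}(D)$, observe they live in a fixed ball, and apply Blaschke selection, differing only in whether one extracts convergence of the approximating representatives $\hat{\Gamma}_k$ directly (as the paper does, after noting the origin is $1/k$-close to each $\hat{\Gamma}_k$) or first extracts convergence of the normalized domains and then handles the translation vectors in a second step (as you do). Your version is slightly more explicit about why the translates cannot escape to infinity, which is a subtlety the paper passes over quickly, but the underlying mechanism is the same.
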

\begin{proof}
    Let $\Tilde{f}$ be a lift of $f$ to $\rtwo$. As $f$ is in $P^{\Gamma}$, there exists a sequence of natural numbers $(n_k)_{k \in \m N}$ such that for each $k \in \nstar$, $\Tilde{f}^{n_k}(D)$ is a $k$-large approximate of $\Gamma$. So $\diam(\Tilde{f}^{n_k}(D)) > k$, and there exists $\hat{\Gamma}_k \in \Gamma$ of diameter $1$ such that
    \begin{equation*}
        d_H(\frac{\Tilde{f}^{n_k}(D) - \Tilde{f}^{n_k}(\Tilde{x}_0)}{\diam(\Tilde{f}^{n_k}(D))},\hat{\Gamma}_k) < \frac{1}{k}.
    \end{equation*}
    Given that the origin is of distance at most $1/k$ from $\hat{\Gamma}_k$, there exists $\hat{\Gamma} \in \Gamma$ and an increasing sequence of positive integers $(k_i)_{i \in \m N}$ such that $\hat{\Gamma}_{k_i} \to \hat{\Gamma}$ as $i \to \infty$ for the Hausdorff topology. Therefore, in the Hausdorff topology
    \begin{equation*}
        \frac{\Tilde{f}^{n_{k_i}}(D) - \Tilde{f}^{n_{k_i}}(\Tilde{x}_0)}{\diam(\Tilde{f}^{n_{k_i}}(D))} \xrightarrow[]{i \to \infty} \hat{\Gamma}.
    \end{equation*}
    So $\Gamma$ is the homothety type of a generalized rotation set of $f$.
\end{proof}
To prove the genericity of maps admitting a generalized rotation set of a fixed homothety type $\Gamma \in \shapes$ in $\obartor$, it suffices to show that for each $k \in \nstar$, the set $P_k^{\Gamma} \subseteq \obartor$ is open and dense. Then $P^{\Gamma}$ would be a residual set of maps with generalized rotation sets of homothety type $\Gamma$. By continuity of the mapping $f \mapsto f^n$ in the $C^0$ topology on $\obartor$, the sets $P_k^{\Gamma}$ are open. The following lemmas allow us to show density of $P_k^{\Gamma}$ for a rational zonogon $\Gamma$. \par
The action of a linear mapping $A \in \mathrm{GL}(2,\m Z)$ on $\rtwo$ by $x \mapsto Ax$ induces a homeomorphism on the torus. This homeomorphism is denoted by $f_A$. 
\begin{lemma}
\label{lem:one}
    For any $\Gamma \in \shapes$, $k \in \nstar$, and any torus diffeomorphism $g$ homotopic to $f_A$ for some $A \in \mathrm{GL}(2,\m Z)$, there exists $l \in \nstar$ such that $$\conj{g}{P_l^{A\inv(\Gamma)}} = \set{\conj{g}{f} | f \in P_l^{A\inv(\Gamma)}} \subseteq P_k^{\Gamma}.$$
\end{lemma}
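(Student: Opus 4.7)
The strategy is to track how the iterate $\tilde g \tilde f^n \tilde g\inv(D) = (\widetilde{\conj{g}{f}})^n(D)$ relates to $\tilde f^n(D)$ by inserting two bounded perturbations around one linear change of coordinates, then piping everything through Lemmas \ref{lem:perturbation} and \ref{lem:linear-mapping}. Two structural inputs drive the argument: every $f \in \obartor$ is homotopic to the identity, so any lift $\tilde f$ commutes with integer translations; and $g$ being homotopic to $f_A$ means any lift $\tilde g$ satisfies $\tilde g(x+\zeta) = \tilde g(x) + A\zeta$ for $\zeta \in \ztwo$, so $\tilde g - A$ is $\ztwo$-periodic and in particular bounded by a constant $C_g$ depending only on $g$.

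The key preliminary estimate I would establish is
\[
d_H\bigl(\tilde f^n(D),\, \tilde f^n(\tilde g\inv(D))\bigr) \leq C_1
\]
for a constant $C_1$ depending only on $g$, uniformly in $n$ and in $f \in \obartor$. For this, cover the compact $\tilde g\inv(D)$ by finitely many integer translates $D + \zeta$, $\zeta \in \Lambda \subset \ztwo$, and conversely cover $D$ by finitely many translates $\tilde g\inv(D) + \eta$, $\eta \in \Lambda' \subset \ztwo$. The second covering is available because $\ztwo$-translates of $\tilde g\inv(D)$ tile $\rtwo$, as $\tilde g\inv$ is $A\inv$-equivariant and $A\inv(\ztwo) = \ztwo$. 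Applying $\tilde f^n$, which commutes with integer translations, both inclusions are transported while shifting by the same integer vectors, giving the claimed bound with $C_1 = \max\bigl(\max_{\Lambda}|\zeta|,\max_{\Lambda'}|\eta|\bigr)$, independent of $n$. Alongside this, the straightforward estimate $d_H(\tilde g(K), A(K)) \leq C_g$ for every compact $K \subset \rtwo$ provides the second perturbation bound.

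The proof is then a three-step backwards chase. Given $k$, I choose thresholds in reverse: Lemma \ref{lem:perturbation} applied with perturbation bound $C_g$ and target $k$ produces some threshold $r_3$; Lemma \ref{lem:linear-mapping} applied with the linear map $A$ and target $r_3$ produces a threshold $r_2$; Lemma \ref{lem:perturbation} applied with perturbation bound $C_1$ and target $r_2$ produces the required $l$. For $f \in P_l^{A\inv(\Gamma)}$, pick $n$ with $\isla{\tilde f^n(D)}{l}{A\inv(\Gamma)}$; the first perturbation gives $\isla{\tilde f^n(\tilde g\inv(D))}{r_2}{A\inv(\Gamma)}$; the linear mapping step gives $\isla{A\tilde f^n(\tilde g\inv(D))}{r_3}{\Gamma}$, using $A(A\inv(\Gamma)) = \Gamma$; the final perturbation gives $\isla{\tilde g\tilde f^n(\tilde g\inv(D))}{k}{\Gamma}$, i.e.\ $\conj{g}{f} \in P_k^{\Gamma}$.

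The main obstacle I expect is the bookkeeping behind the first Hausdorff estimate. Both directions of the inequality must be controlled, and the uniformity of $C_1$ in $n$ crucially relies on $\tilde f^n$ commuting with every integer translation, which in turn depends on $f$ being homotopic to the identity. Once that uniform bound is secured, the rest is a disciplined three-step application of the two preparatory lemmas, with the intermediate thresholds peeled off in reverse order from the final target $k$.
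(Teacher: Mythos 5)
Your proposal is correct and follows essentially the same argument as the paper: the same decomposition of $\tilde g \tilde f^n \tilde g\inv$ into a bounded pre-perturbation, a linear map $A$, and a bounded post-perturbation (the paper writes the latter as $\tilde\phi = \tilde g \circ A\inv$, equivalent to your $d_H(\tilde g(K),A(K)) \le C_g$), with the threshold $l$ determined by peeling back through Lemmas~\ref{lem:perturbation}, \ref{lem:linear-mapping}, \ref{lem:perturbation} in the same order. The paper's covering of $\tilde g\inv(D)$ is set up so that the pieces $D'_i$ and $D_i = -\zeta_i + D'_i$ give both directions of the Hausdorff bound simultaneously, whereas you invoke two separate finite covers, but the estimate and its reliance on $\tilde f^n$ commuting with $\ztwo$-translations are identical.
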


\begin{lemma}
\label{lem:two}
    Let $\ell \in \nstar$, $p/q \in \m Q$, and $\Gamma \in \shapes$ a rational zonogon. There exists $h \in \mathrm{Diff}\toinfty(\torus)$ such that
        \begin{itemize}
            \item $\conj{h}{R_{({1}/{q},0)}} = R_{({1}/{q},0)}$, and
            \item There exists a sequence of vectors $({\theta}_i)_{i\in \m N}$ such that ${\theta}_i \to ({p}/{q},0)$, and $\conj{h}{R_{{\theta}_i}} \in P_{\ell}^{\Gamma}$.
        \end{itemize}
\end{lemma}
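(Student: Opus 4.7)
The plan is to use Lemma \ref{lem:conj-by-C} to guarantee the commutation with $R_{(1/q,0)}$, reduce via Lemma \ref{lem:linear-mapping} to a problem about a diffeomorphism homotopic to the identity, and then construct that diffeomorphism as a carefully arranged composition of shears in the directions of the edges of the zonogon. Set $H := C_q h_0 C_q^{-1}$ (as a torus diffeomorphism) for some $h_0 \in \mrm{Diff}\toinfty(\torus)$ homotopic to the identity; by Lemma \ref{lem:conj-by-C}, $H$ automatically commutes with $R_{(1/q,0)}$. A direct computation gives $(\tilde H \tilde R_\theta \tilde H^{-1})^n(D) = C_q\bigl(\tilde h_0\, \tilde R_{nC_q^{-1}\theta}\, \tilde h_0^{-1}(D_q)\bigr)$, where $D_q := C_q^{-1}(D) = [0,q]\times[0,1]$. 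Applying Lemma \ref{lem:linear-mapping} with $A = C_q$, it suffices to find $h_0$ together with sequences $\sigma_i \to (p,0) = C_q^{-1}(p/q,0)$ and $n_i \in \nstar$ such that $\tilde h_0\, \tilde R_{n_i \sigma_i}\, \tilde h_0^{-1}(D_q) \in \la{\ell'}{\Gamma'}$, where $\Gamma' := C_q^{-1}(\Gamma)$ is still a rational zonogon and $\ell'$ is sufficiently large in terms of $\ell$ and $q$.

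\emph{Absorbing the integer part.} Writing $n_i\sigma_i = n_i(p,0) + u_i$ and using that $\tilde h_0$ commutes with $\ztwo$-translations while $n_i(p,0) \in \ztwo$, we get
\[
\tilde h_0\, \tilde R_{n_i\sigma_i}\, \tilde h_0^{-1}(D_q) = \tilde R_{n_i(p,0)}\bigl(\tilde h_0\, \tilde R_{u_i}\, \tilde h_0^{-1}(D_q)\bigr),
\]
and the outer translation affects neither the diameter nor the homothety type. Fixing any vector $u$ and taking $u_i = u$ with $n_i \to \infty$ gives $\theta_i = (p/q,0) + C_q u/n_i \to (p/q,0)$. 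Therefore it suffices to construct a single $h_0$ and a single vector $u$ with $\tilde h_0\, \tilde R_u\, \tilde h_0^{-1}(D_q) \in \la{\ell'}{\Gamma'}$.

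\emph{Construction.} Write $\Gamma' = \zon(v_1,\ldots,v_l)$ with rational generators $v_j$. Define $h_0 = \psi_1 \circ \cdots \circ \psi_l$, where each $\psi_j \in \mrm{Diff}\toinfty(\torus)$ is a smooth diffeomorphism homotopic to the identity, supported in a thin tubular neighborhood of an appropriate simple closed curve, and realizing a shear in direction $v_j$ of total amplitude $\sim \ell'$. Arrange the supports of the $\psi_j$'s to be pairwise disjoint (using $\ztwo$-translations to separate them) and choose $u$ so that $\tilde h_0^{-1}(D_q) + u$ crosses the support of each $\psi_j$ in turn. Tracking the set through the composition, each $\psi_j$ contributes a segment of displacement in direction $v_j$, and the cumulative image is close in Hausdorff distance to a scaled copy of the Minkowski sum $[-v_1,v_1] + \cdots + [-v_l,v_l] = \Gamma'$, as illustrated in Figure \ref{fig:construction}. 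Lemma \ref{lem:perturbation} then absorbs the errors coming from the finite tube widths and the smoothing of the shears, yielding the required $\ell'$-large approximate of $\Gamma'$.

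\emph{Main obstacle.} The principal technical difficulty lies in the geometric construction of the $\psi_j$'s and the choice of $u$: one must verify that the shear supports can be placed coherently inside $\torus$, that the translated set visits them in the prescribed order, and that the individual displacement contributions combine cleanly into the full Minkowski sum rather than interfering with one another. This bookkeeping — which is where the rationality of $\Gamma'$ is used — is the content of the final subsection of the paper.
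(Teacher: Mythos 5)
Your reduction is sound and, in one respect, cleaner than the paper's. You conjugate by $C_q$ to force commutation with $R_{(1/q,0)}$ via Lemma \ref{lem:conj-by-C}, and you absorb the integer part of $n_i\sigma_i$: since $n_i(p,0)\in\ztwo$ and $\tilde h_0$ commutes with integer translations, $\tilde h_0\tilde R_{n_i\sigma_i}\tilde h_0^{-1}(D_q)$ is an integer translate of $\tilde h_0\tilde R_u\tilde h_0^{-1}(D_q)$, and large approximates are translation-invariant. This lets you work with a single fixed vector $u$, whereas the paper instead establishes in Lemma \ref{lem:spreading-to-parallelogon} a uniform statement over all translations $(a,b)$ with $a\in 1/(2\xi)+(1/\xi)\m Z$ and then picks $\alpha_i$ so that $t_i\alpha_i$ lands in that family. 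Both work; yours is a nice simplification of the bookkeeping.

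The gap is in the construction of $h_0$, and it is not a matter of bookkeeping. An $\ell'$-large approximate of $\Gamma'$ must satisfy two things: it must sit inside a small neighborhood of a scaled copy $\hat\Gamma'$ of the zonogon, \emph{and} it must be $\varepsilon$-dense in $\hat\Gamma'$ with $\varepsilon$ small compared to its diameter. You propose $h_0=\psi_1\circ\cdots\circ\psi_l$ with each $\psi_j$ a shear supported in a thin tube around a simple closed curve, with pairwise disjoint supports, chosen so that the translated set ``crosses the support of each $\psi_j$ in turn.'' A shear of amplitude $\sim\ell'$ supported in a thin tube stretches only the thin slab of the set that lies inside the tube, producing an essentially one-dimensional ``fin'' of length $\sim\ell'$; chaining finitely many disjoint such tubes gives a thin winding set of length $O(l\,\ell')$, which cannot be $\varepsilon$-dense in a two-dimensional zonogon of area $\sim(\ell')^2$ for $\varepsilon\ll\ell'$. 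It traces a path inside the zonogon, not the zonogon. The paper's construction handles exactly this obstruction: instead of locally supported shears it uses the globally defined triangle-wave shears $J_{\eta,\xi}(x,y)=(x,y+\eta\phi_\xi(x))$ with very small period $1/\xi$, conjugated by $A_j\in\mrm{SL}(2,\m Z)$ to point along $v_j$. A small period means a single $J_{\eta,\xi}$ folds a transverse segment back and forth $\sim\xi$ times across a band of height $\sim 2|\eta|$, producing an image genuinely dense in a thickened region; iterated over the edge directions this fills the zonogon. The two halves of the argument are Lemma \ref{lem:induction-step} (containment in a growing neighborhood of the Minkowski sum) and Lemma \ref{lem:spread-shape} with Corollary \ref{cor:induction-step} (a union of segments in the image that is $\varepsilon$-dense in the grown zonogon). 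A further point your sketch glosses over: the conjugate $\conj{F}{\tilde R_{(a,b)}}$ applies $F^{-1}$ before and $F$ after the translation, and these do not cancel against a generic $\tilde R_u$; the paper uses the identity $J_{1/2,\xi}\tilde R_{(a,b)}J_{-1/2,\xi}=\tilde R_{(a,b)}J_{-1,\xi}$, valid precisely for $a\in 1/(2\xi)+(1/\xi)\m Z$ by anti-symmetry of $\phi_\xi$, to convert the two-sided conjugation into a one-sided chain of expanding shears. Your version would need an analogous mechanism. In short: the reduction is fine, but the heart of the lemma --- producing density in a two-dimensional shape from one application of the conjugate --- is missing.
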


\begin{proof}[Proof of Theorem \ref{thm:generic-all-symmetric}]
    Fix $\Gamma \in \shapes$ a rational zonogon. We will show that $P^{\Gamma}$ is a residual subset of $\obartor$. As mentioned above, for each $k \in \nstar$ the set $P_k^{\Gamma}$ is open. It remains to show the density. Fix $k \in \nstar$. The set $$\{ \conj{g}{R_{(p/q,0)}} \mid g \in \mathrm{Diff}\toinfty(\torus),p/q \in \m Q \}$$ is dense in $\obartor$. To prove that $P_k^{\Gamma}$ is dense, it suffices to show that $\conj{g}{R_{(p/q,0)}} \in \cinfclosure{P_k^\Gamma}$ for any $g \in \mathrm{Diff}\toinfty(\torus)$ and $p/q \in \m Q$. \par
    Fix such $g$ and $p/q$. There exists $A \in \mathrm{GL}(2,\m Z)$ such that $f_A$ is homotopic to $g$. Take $l$ as provided by Lemma \ref{lem:one} for $\Gamma$, $k$, and $g$. By Lemma \ref{lem:two} applied to $l$, $p/q$, and $A\inv \Gamma$ which is also a rational zongon, there exists a sequence of diffeomorphisms $(\conj{h}{R_{{\theta}_i}})_i$ in $P_l^{A\inv(\Gamma)}$ converging to $R_{(p/q,0)}$. If we conjugate the diffeomorphisms of this sequence by $g$, using Lemma \ref{lem:one} we obtain a sequence of diffeomorphisms in $P_k^{\Gamma}$ converging to $\conj{g}{R_{(p/q,0)}}$. This concludes the proof of genericity of $P^{\Gamma}$. \par

    There are countably many rational zonogons. Therefore a generic element in $\obartor$ has a generalized rotation set of homothety type $\Gamma$, for any rational zonogon $\Gamma$. Any point-symmetric compact convex subset of $\rtwo$ is the limit of a sequence of rational zonogons in Hausdorff topology. As the set of generalized rotation sets of a map is closed in Hausdorff topology, we conclude that a generic element in $\obartor$ admits generalized rotation sets of all homothety type of point-symmetric compact convex subsets of $\rtwo$.
\end{proof}

\subsection{Proof of Lemma \ref{lem:one} and a similar result}
\label{sec:proof-one}
\begin{proof}[Proof of Lemma \ref{lem:one}]
\begin{figure}[ht]
    \centering
    \includegraphics[width=0.8\textwidth]{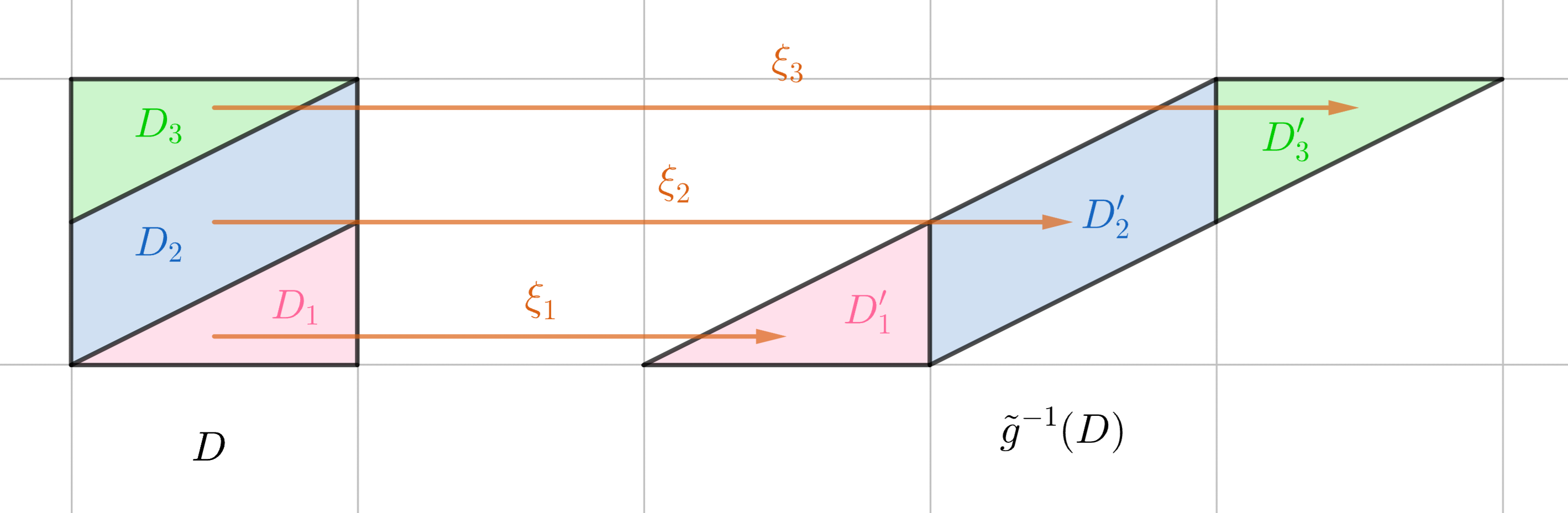}
    \caption{}
    \label{fig:puzzle}
\end{figure}
    Fix $\Tilde{g}$ a lift of $g$ to the plane. The plane homeomorphism $\Tilde{\phi} \coloneqq \Tilde{g} \circ A\inv$ commutes with integer translations, so it has bounded distance from $\mathrm{Id}_{\rtwo}$. Note that $\Tilde{g}\inv(D)$ is a fundamental domain of the action of $\ztwo$ on $\rtwo$ by translations. Let $\set{\zeta_i}_{i=1}^q = \set{\zeta \in \m Z^2 | \Tilde{g}\inv(D) \cap (\zeta+D) \neq \varnothing}$. For each $1 \leq i \leq q$, denote the set $\Tilde{g}\inv(D) \cap (\zeta_i+D)$ by $D'_i$, and denote the set $-\zeta_i+D'_i$ by $D_i$. As $D$ is a fundamental domain, $\Tilde{g}\inv(D) = \cup_{i=1}^{q} D'_i$, and as $\Tilde{g}\inv(D)$ is a fundamental domain, $D = \cup_{i=1}^{q} D_i$ (see Figure \ref{fig:puzzle}). Denote $Z = \max_i\set{|\zeta_i|} + 1$. \par
    We will obtain $l$ in three steps. First, denote by $l_1$ the lower bound ($s>0$) provided by Lemma $\ref{lem:perturbation}$ for $r = k$, and $d_0=d_{C^0}(\Tilde{\phi},\mrm{Id}_{\rtwo})$. Next, apply Lemma $\ref{lem:linear-mapping}$ to $r=l_1$ and $A$ to obtain the bound $l_2$. Finally, by applying Lemma $\ref{lem:perturbation}$ again for $r=l_2$ and $d_0=Z$ one obtains the bound $l$. Let $f \in P_l^{A\inv(\Gamma)}$ and let $\Tilde{f}$ be a lift. By definition, there exists $n \in \nstar$ such that $\Tilde{f}^n(D)$ is an $l$-large approximate of $A\inv(\Gamma)$. We will show that $\Tilde{g} \circ \Tilde{f}^n \circ \Tilde{g}\inv(D)$ is a $k$-large approximate of $\Gamma$. As a result, $\conj{g}{f}$ will fall in $P_k^{\Gamma}$. \par
    Remark that $P_l^{A\inv(\Gamma)}$ is a subset of $\obartor$ all of whose elements are homotopic to identity on the torus. Therefore $\Tilde{f}$ commutes with integer translations.
    \begin{multline*}
        \Tilde{f}^n(D) = \Tilde{f}^n\left(\bigcup_{i=1}^q D_i\right) = \bigcup_{i=1}^q \Tilde{f}^n(D_i) = \bigcup_{i=1}^q \Tilde{f}^n(-\zeta_i + D'_i) \\ = \bigcup_{i=1}^q (-\zeta_i + \Tilde{f}^n(D'_i)) \subseteq B_Z(\Tilde{f}^n(\Tilde{g}\inv(D))).
    \end{multline*}
    Similarly,
    \begin{multline*}
        \Tilde{f}^n(\Tilde{g}\inv(D)) = \Tilde{f}^n\left(\bigcup_{i=1}^q D'_i\right) = \bigcup_{i=1}^q \Tilde{f}^n(D'_i) = \bigcup_{i=1}^q \Tilde{f}^n(\zeta_i+D_i) \\ = \bigcup_{i=1}^q (\zeta_i+\Tilde{f}^n(D_i)) \subseteq B_Z(\Tilde{f}^n(D)).
    \end{multline*}
        So the Hausdorff distance between $\Tilde{f}^n(\Tilde{g}\inv(D))$ and $\Tilde{f}^n(D)$ is at most $Z$. Given that ${\isla{\Tilde{f}^n(D)}{l}{A\inv(\Gamma)}}$, this implies ${\isla{\Tilde{f}^n(\Tilde{g}\inv(D))}{l_2}{A\inv(\Gamma)}}$. Therefore ${\isla{A \circ \Tilde{f}^n \circ \Tilde{g}\inv(D)}{l_1}{\Gamma}}$. Finally, we can conclude that the domain $${\Tilde{\phi}\circ A \circ \Tilde{f}^n \circ \Tilde{g}\inv(D)=\Tilde{g} \circ \Tilde{f}^n \circ \Tilde{g}\inv(D)}$$ is a $k$-large approximate of $\Gamma$.
\end{proof}
The following lemma has a similar proof and will be used later.
\begin{lemma}
\label{lem:C}
    For any $\Gamma \in \shapes$, $q \in \nstar$, and $k \in \nstar$, there exists $l \in \nstar$ such that $\conj{C_q}{P_l^{C_q\inv(\Gamma)}}= \set{\conj{C_q}{f} | f \in P_l^{C_q\inv(\Gamma)}} \subseteq P_k^{\Gamma}$.
\end{lemma}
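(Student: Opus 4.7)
The plan is to mimic the proof of Lemma~\ref{lem:one}, with the linear map $C_q$ playing the role that $A$ played there. Given $f \in \obartor$ with lift $\Tilde{f}$, Lemma~\ref{lem:conj-by-C} tells us that $\conj{C_q}{f}$ is a well-defined torus diffeomorphism whose lift is $H \coloneqq C_q \circ \Tilde{f} \circ C_q\inv$; iterating, $H^n(D) = C_q\bigl(\Tilde{f}^n(C_q\inv(D))\bigr)$. So the goal becomes: starting from an $l$-large approximate $\Tilde{f}^n(D)$ of $C_q\inv(\Gamma)$, deduce that $C_q(\Tilde{f}^n(C_q\inv(D)))$ is a $k$-large approximate of $\Gamma$. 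This will be achieved by chaining Lemmas~\ref{lem:perturbation} and~\ref{lem:linear-mapping}.

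The key geometric observation is that $C_q\inv(D) = [0,q] \times [0,1]$ is \emph{not} a fundamental domain of $\ztwo$, but rather the union $\bigcup_{i=0}^{q-1}(D+(i,0))$ of $q$ integer translates of $D$. Since $\Tilde{f}$ commutes with integer translations (being a lift of a torus map homotopic to the identity), one has
\[
    \Tilde{f}^n(C_q\inv(D)) = \bigcup_{i=0}^{q-1}\bigl(\Tilde{f}^n(D)+(i,0)\bigr),
\]
from which one immediately reads off $d_H\bigl(\Tilde{f}^n(D),\Tilde{f}^n(C_q\inv(D))\bigr) \leq q-1$, uniformly in $n$.

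With this in hand, $l$ would be chosen in two stages. First, apply Lemma~\ref{lem:linear-mapping} to $r=k$ and the invertible matrix $C_q$ to obtain a bound $l_2$ such that any $l_2$-large approximate of $C_q\inv(\Gamma)$ is sent by $C_q$ to a $k$-large approximate of $C_q(C_q\inv(\Gamma))=\Gamma$. Second, apply Lemma~\ref{lem:perturbation} to $r=l_2$ and $d_0=q-1$ to obtain the desired $l$: any compact set at Hausdorff distance at most $q-1$ from an $l$-large approximate of $C_q\inv(\Gamma)$ is itself an $l_2$-large approximate of $C_q\inv(\Gamma)$. For $f \in P_l^{C_q\inv(\Gamma)}$ with witness $n$, chaining the above gives that $\Tilde{f}^n(C_q\inv(D))$ is an $l_2$-large approximate of $C_q\inv(\Gamma)$, hence $H^n(D) = C_q(\Tilde{f}^n(C_q\inv(D)))$ is a $k$-large approximate of $\Gamma$, and therefore $\conj{C_q}{f} \in P_k^{\Gamma}$.

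I do not expect any serious obstacle beyond bookkeeping. Unlike in Lemma~\ref{lem:one} there is no auxiliary perturbation $\Tilde{\phi}$ to absorb, because $C_q$ is already exactly the linear change of variables used in the conjugation. The only mild subtlety is the failure of $C_q\inv(D)$ to be a fundamental domain, but this is precisely what the explicit decomposition above accounts for, producing the required Hausdorff bound automatically.
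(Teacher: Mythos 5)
Your proof is correct and follows essentially the same path as the paper's: decompose $C_q^{-1}(D)$ as $\bigcup_{i=0}^{q-1}\bigl((i,0)+D\bigr)$, use commutation with integer translations to bound the Hausdorff distance between $\Tilde{f}^n(D)$ and $\Tilde{f}^n(C_q^{-1}(D))$, then chain Lemma~\ref{lem:perturbation} and Lemma~\ref{lem:linear-mapping} with the constants chosen in the reverse order of application. The only cosmetic difference is that you take $d_0 = q-1$ where the paper uses $q$; both work.
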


\begin{proof}
    Let's start by determining $l$. Denote by $l_1$ the bound provided by Lemma $\ref{lem:linear-mapping}$ for $r = k$, and $A=C_q$. Then, apply Lemma $\ref{lem:perturbation}$ to $r=l_1$ and $d_0=q$ to obtain the bound $l$. \par
    Let $f \in P_l^{C_q\inv(\Gamma)}$ and let $\Tilde{f}$ be any lift of $f$. Then there exists $n \in \nstar$ such that $\isla{\Tilde{f}^n(D)}{l}{C_q\inv(\Gamma)}$. We have
    \begin{equation*}
        C_q\inv(D)=C_q\inv([0,1]^2)=[0,q] \times [0,1] = \bigcup_{i=0}^{q-1} \big((i,0)+D \big).
    \end{equation*}
    The map $\Tilde{f}$ commutes with integer translations, so
    \begin{equation*}
        \Tilde{f}^n(C_q\inv(D)) = \Tilde{f}^n \left(\bigcup_{i=0}^{q-1} \big((i,0)+D \big) \right) = \bigcup_{i=0}^{q-1} \left((i,0)+\Tilde{f}^n(D)\right).
    \end{equation*}
    Therefore, the Hausdorff distance between $\Tilde{f}^n(C_q\inv(D))$ and $\Tilde{f}^n(D)$ is less than $q$. As we have $\isla{\Tilde{f}^n(D)}{l}{C_q\inv(\Gamma)}$, this implies $\isla{\Tilde{f}^n(C_q\inv(D))}{l_1}{C_q\inv(\Gamma)}$. Therefore $\isla{C_q \circ \Tilde{f}^n \circ C_q\inv(D)}{k}{\Gamma}$. This implies $\conj{C_q}{f} \in P_k^{\Gamma}$.
\end{proof}

\subsection{Proof of Lemma \ref{lem:two}}
\begin{sloppypar}
    Let $\xi \in \nstar$. The triangle wave function of period $1/\xi$ is denoted by ${\phi_{\xi}:\m R \to [-1,1]}$:
\end{sloppypar}
\begin{equation*}
    \phi_{\xi}(x)=(-1)^{z+1}(4\xi x - 2z - 1), \quad z \in \m Z,x \in [\frac{z}{2\xi},\frac{z+1}{2\xi}[
\end{equation*}
Let $\eta \in \m R^{*}$. The following formula defines a homeomorphism of $\rtwo$:
\begin{equation*}
    J_{\eta,\xi}(x,y)=(x,y+\eta \phi_{\xi}(x))
\end{equation*}
\begin{figure}[ht]
    \centering
    \includegraphics[width=0.6\textwidth]{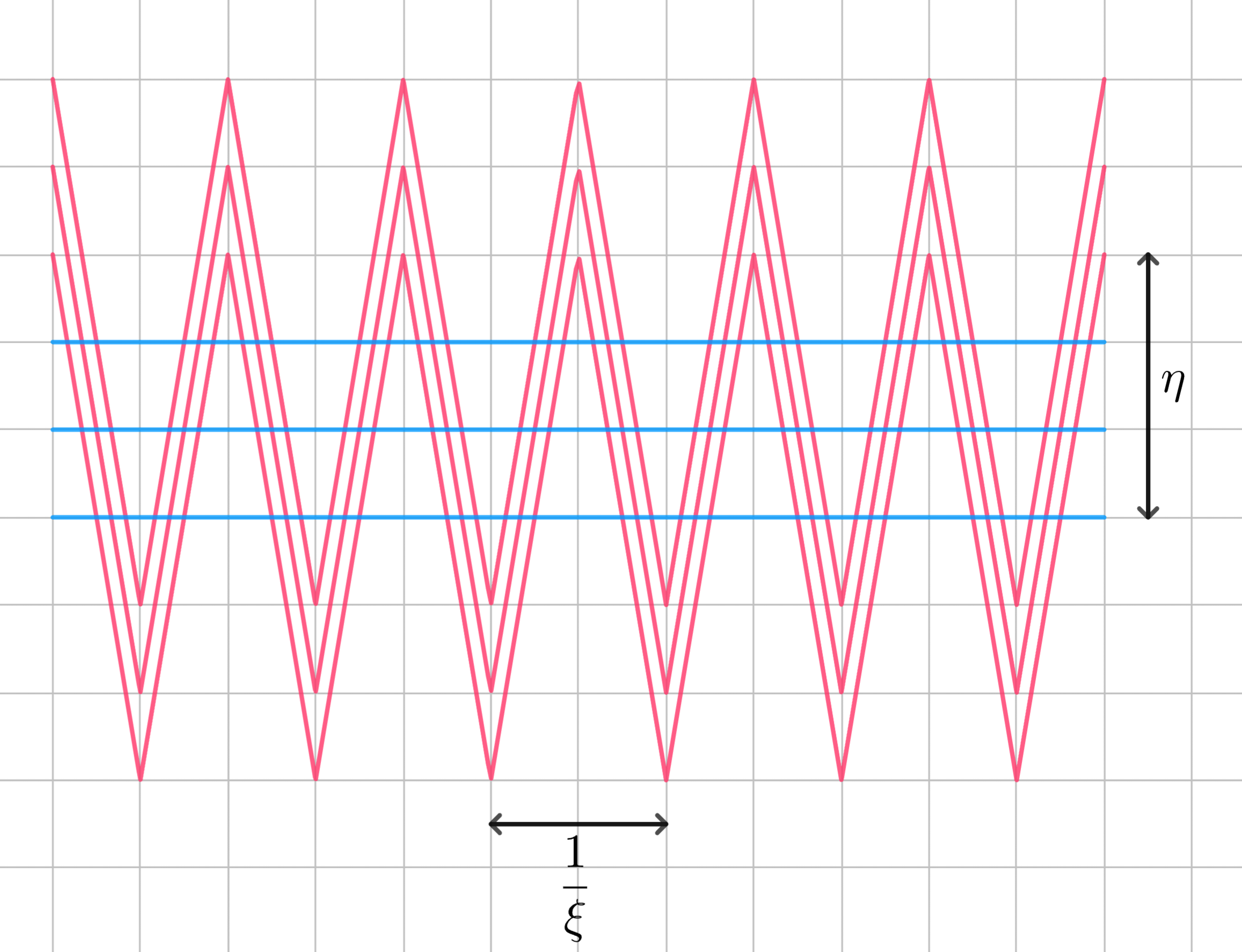}
    \caption{The homeomorphism $J_{\eta,\xi}$ sends the blue lines to the pink piecewise affine curves.}
    \label{fig:jagged}
\end{figure}
This map preserves vertical lines. On each line $x=x_0$, the dynamics is a translation by a number in the interval $[-|\eta|,|\eta|]$, and this number is $1/{\xi}$-periodic in $x_0$. In particular, $J_{\eta,\xi}$ commutes with integer translations and quotients down to a torus homeomorphism homotopic to identity. Note that $J_{\eta_1,\xi}\circ J_{\eta_2,\xi}=J_{\eta_1+\eta_2,\xi}$ and $J_{\eta,\xi}\inv=J_{-\eta,\xi}$.\par
The length of an affine segment of the plane $I$ is denoted by $|I|$.
\begin{lemma}
\label{lem:spread-shape}
Let $X$ be a non-empty subset of $\rtwo$. Fix $\eta \in \m R^{*}$, and let $\varepsilon,\varepsilon',\ell,m,$ and $M$ be positive real numbers. Let $\gamma$ be a union of segments $\cup_{i \in S}I_i$ in $\rtwo$ with the following properties:
\begin{itemize}
    \item $\gamma$ is $\varepsilon$-dense in $X$,
    \item for all $i \in S$, the norm of the slope of $I_i$ is less than $M$, and
    \item for all $i \in S$, the length of $I_i$ is greater than $\ell$.
\end{itemize}
Then there exists $\xi_0=\xi_0(|\eta|,\varepsilon',\ell,m,M) \in \nstar$ such that for any $\xi \geq \xi_0$, there exists a subset $\gamma' \subseteq J_{\eta,\xi}(\gamma)$ with the following properties:
\begin{itemize}
    \item $\gamma'$ is a union of segments,
    \item $\gamma'$ is ($\varepsilon+\varepsilon'$)-dense in $\stre(X,(0,\eta))$,
    \item the norms of the slopes of the segments of $\gamma'$ are greater than $m$, and
    \item the lengths of the segments of $\gamma'$ are greater than $|\eta|$.
\end{itemize}
\end{lemma}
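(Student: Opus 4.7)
The plan is to construct $\gamma'$ by cutting each image curve $J_{\eta,\xi}(I_i)$ along the breakpoints $x = z/(2\xi)$ of $\phi_{\xi}$ and retaining only the ``full-width'' interior subsegments, meaning those whose $x$-projection is an entire half-period $[z/(2\xi),(z+1)/(2\xi)]$ contained in the $x$-projection of $I_i$. Since $J_{\eta,\xi}$ preserves $x$-coordinates and acts on each monotone interval of $\phi_{\xi}$ as an affine vertical shift with velocity $\pm 4\xi\eta$, each retained piece is a genuine straight segment of slope $s_i \pm 4\xi\eta$, where $s_i$ is the slope of $I_i$. Its norm is at least $4\xi|\eta| - M$, which exceeds $m$ as soon as $\xi > (m+M)/(4|\eta|)$. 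The vertical extent of such a full-width interior piece equals $|s_i/(2\xi) \pm 2\eta|$, bounded below by $2|\eta| - M/(2\xi)$, exceeding $|\eta|$ as soon as $\xi > M/(2|\eta|)$. This handles the slope and length requirements.

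The density property is where the real work lies. Given a target point $q + (0,t) \in \stre(X,(0,\eta))$ with $q \in X$ and $|t|\leq|\eta|$, the $\varepsilon$-density of $\gamma$ in $X$ yields $p \in \gamma$, lying on some $I_j$, with $|p-q| < \varepsilon$. If $p$ happens to lie in a truncated boundary cell of $I_j$ (and so not in any retained subsegment), I translate $p$ along $I_j$ by at most $\sqrt{1+M^2}/\xi$ to reach a point $p^{*}$ inside a full-width cell; this is possible because $I_j$ has horizontal extent at least $\ell/\sqrt{1+M^2}$, which exceeds $3/(2\xi)$ once $\xi > 3\sqrt{1+M^2}/(2\ell)$. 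On the monotone cell now containing $p^{*}$, the map $x \mapsto \eta\phi_{\xi}(x)$ surjects onto $[-|\eta|,|\eta|]$, so I can find $p' \in I_j$ in the same cell with $\eta\phi_{\xi}(p'_x) = t$ and $|p'_x - p^{*}_x| \leq 1/(2\xi)$. A direct computation then gives $J_{\eta,\xi}(p') = (p'_x, p'_y + t)$, which lies in $\gamma'$ by construction, and whose distance to $q+(0,t)$ equals $|p'-q| \leq \varepsilon + 3\sqrt{1+M^2}/(2\xi)$. This is less than $\varepsilon + \varepsilon'$ as soon as $\xi > 3\sqrt{1+M^2}/(2\varepsilon')$.

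Setting $\xi_0$ to be the maximum of these four explicit lower bounds produces a valid $\xi_0(|\eta|,\varepsilon',\ell,m,M)$. The main technical obstacle is the boundary bookkeeping in the density step: the witness $p$ supplied by the $\varepsilon$-density of $\gamma$ need not land inside a retained cell of $I_j$, and one must move along $I_j$ without spoiling the proximity to $q$. This is precisely what forces the dependence of $\xi_0$ on $\ell$, since a longer $I_j$ absorbs the required translation more comfortably. Once this step is in place, the remaining slope and length estimates are straightforward consequences of the piecewise-affine formula for $J_{\eta,\xi}$.
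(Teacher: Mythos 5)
Your proof is correct and takes essentially the same route as the paper: you construct $\gamma'$ by retaining only the subsegments of each $J_{\eta,\xi}(I_i)$ coming from full half-period cells, verify slope and length bounds from the piecewise-affine formula, and prove density by moving the $\varepsilon$-witness $p$ a controlled distance along $I_j$ into a full-width cell and then adjusting the $x$-coordinate within that cell to hit the prescribed vertical offset. The paper packages the bookkeeping via the quantity $\delta = \min\{\ell,\varepsilon'\}\cos\arctan(M)$ and sets $\xi_0 = \lceil 1/\delta + (M+m)/(2|\eta|)\rceil$, but this is the same set of constraints you derive (your four lower bounds on $\xi$), and $\cos\arctan(M)=1/\sqrt{1+M^2}$ matches your use of $\sqrt{1+M^2}$.
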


\begin{proof}
\begin{figure}[ht]
    \centering
    \begin{subfigure}{0.45\textwidth}
        \centering
        \includegraphics[width=\textwidth]{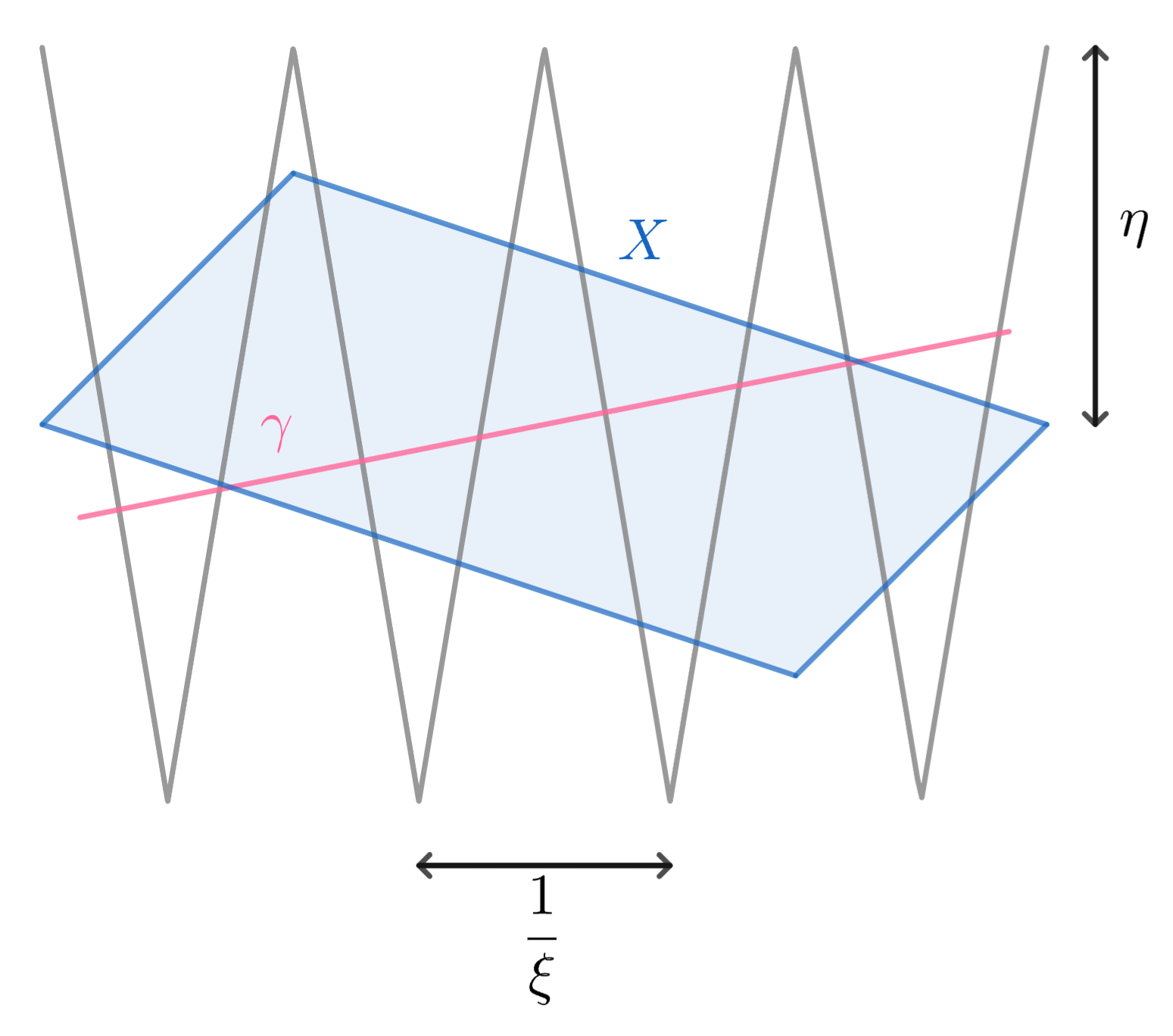}
        \caption{A segment $\gamma$ and a set $X$}
        \label{fig:before-stretch}
    \end{subfigure}
    \begin{subfigure}{0.45\textwidth}
        \centering
        \includegraphics[width=\textwidth]{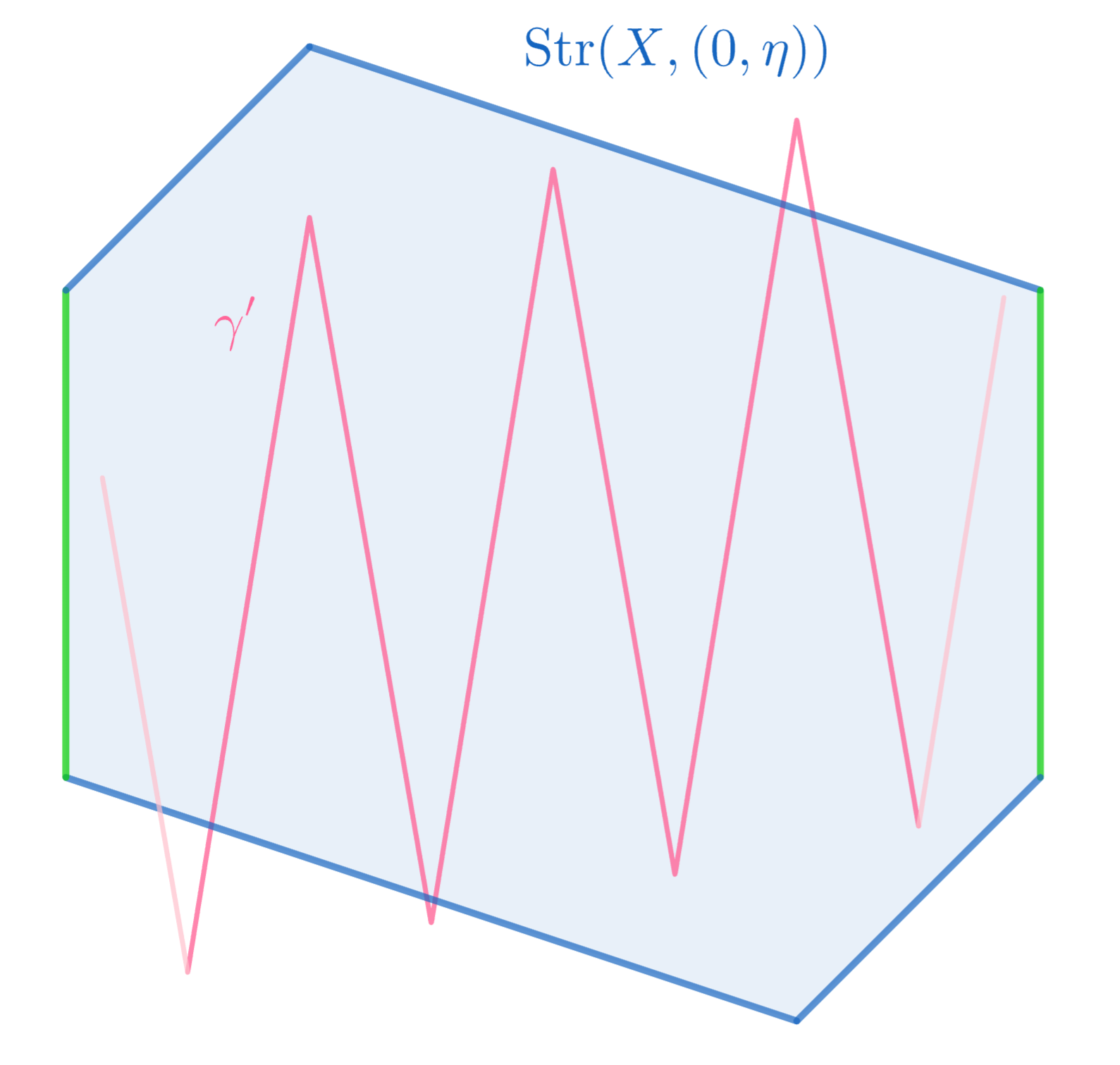}
        \caption{The image of $\gamma$ under $J_{\eta,\xi}$ and $\stre(X,(0,\eta))$}
        \label{fig:after-stretch}
    \end{subfigure}
        \caption{}
\end{figure}
    Write $I_i=[x_i,y_i]$ with end points $x_i=(a_i,b_i)$ and $y_i=(c_i,d_i)$. Denote the slope of $I_i$ by $m_i = (d_i-b_i)/(c_i-a_i)$, and let $\theta_i = \arctan(m_i) \in ]-\frac{\pi}{2},\frac{\pi}{2}[$ denote its positive angle. Let $\delta = \min \{\ell,\varepsilon'\} \times \cos\arctan(M)$. Then,
    \begin{equation*}
        \delta < \min_{i \in S} \{\min \{|I_i|,\varepsilon'\} \times \cos \theta_i\}.
    \end{equation*}
    Let $\xi_0$ be the following positive integer:
    \begin{equation*}
        \left \lceil\frac{1}{\delta}+\frac{M+m}{2|\eta|}\right\rceil
    \end{equation*}
    Fix $\xi \geq \xi_0$. For each $i$, the horizontal diameter of $I_i$, i.e. $|I_i|\cos \theta_i$, is greater than $\delta$ and by the choice of $\xi_0$, greater than $\frac{1}{\xi}$. Let $I'_i=[x'_i,y'_i]$ be the largest sub-segment of $I_i$ that projects into an interval of the form $[\frac{z^1_i}{2\xi},\frac{z^2_i}{2\xi}]$, with $z^1_i<z^2_i \in \m Z$. Write $x'_i=(a'_i,b'_i)$ and $y'_i=(c'_i,d'_i)$, with $a'_i=\frac{z^1_i}{2\xi}$ and $c'_i=\frac{z^2_i}{2\xi}$. By the choice of $I'_i$, we have $|a_i - a'_i|<\frac{1}{2\xi}$, and $|c_i - c'_i|<\frac{1}{2\xi}$. We know $\xi \geq \xi_0 \geq \delta\inv$, therefore
    \begin{equation}
    \label{eq:left-end-point}
        d(x_i,x'_i) = \frac{|a_i - a'_i|}{\cos \theta_i} < \frac{1}{2\xi \cos \theta_i} \leq \frac{\delta}{2\cos \theta_i} < \frac{\varepsilon'}{2}.
    \end{equation}
    In the same way,
    \begin{equation}
    \label{eq:right-end-point}
        d(y_i,y'_i) < \frac{\varepsilon'}{2}.
    \end{equation}
    Let $\gamma'$ be $\cup_{i \in S} J_{\eta,\xi}(I'_i)$. As the map $J_{\eta,\xi}$ is piecewise affine, $\gamma'$ is a union of segments. More precisely, each subsegment of $I'_i$ projecting to an interval of the form $[\frac{z}{2\xi},\frac{z+1}{2\xi}]$ in the horizontal direction will be mapped to a single segment by $J_{\eta,\xi}$. \par
    \begin{sloppypar}
        An arbitrary point in $\stre(X,(0,\eta))$ may be written as $w+(0,\tau)$, with $w \in X$ and $\tau \in [-|\eta|,|\eta|]$. By $\varepsilon$-density of $\gamma$ in $X$ and inequalities \eqref{eq:left-end-point} and \eqref{eq:right-end-point}, there exists $i \in S$ and $w' \in I'_i$ such that $d(w,w') < \varepsilon + \frac{\varepsilon'}{2}$. Write ${w' = (a',b')}$. Then $a'$ falls in an interval of the form $[\frac{z}{2\xi},\frac{z+1}{2\xi}] \subseteq [\frac{z^1_i}{2\xi},\frac{z^2_i}{2\xi}]$ where $z$ is an integer. Let $I_{w'}$ denote the subsegment of $I'_i$ that projects to $[\frac{z}{2\xi},\frac{z+1}{2\xi}]$ in the horizontal direction. As $\tau \in [-|\eta|,|\eta|]$, there exists $w''=(a'',b'') \in I_{w'}$ such that $J_{\eta,\xi}(w'')=w''+(0,\tau)$. As the horizontal distance between $w'$ and $w''$ is at most $\frac{1}{2\xi}$, then similar to the computations done before we see that $d(w',w'') \leq \frac{\varepsilon'}{2}$. We also had that $d(w,w') < \varepsilon + \frac{\varepsilon'}{2}$, so
    \end{sloppypar}
    \begin{equation*}
        d(w+(0,\tau),J_{\eta,\xi}(w''))=d(w+(0,\tau),w''+(0,\tau)) < \varepsilon+\varepsilon'.
    \end{equation*}
    Therefore $\gamma'=\cup_{i \in S} J_{\eta,\xi}(I'_i)$ is ($\varepsilon+\varepsilon'$)-dense in $\stre(X,(0,\eta))$. \par
    The map $J_{\eta,\xi}$ sends a segment of slope $m_i$ to segments of slopes $m_i \pm 4\eta\xi$. Given the choice of $\xi_0$,
    \begin{equation*}
        4|\eta|\xi - \max_{i \in S} |m_i| > 4|\eta|\xi - M > 2|\eta|\xi - M \geq m.
    \end{equation*}
    So all norms of slopes of segments of $J_{\eta,\xi}(\gamma)$ are greater than $m$. Therefore the same holds for the segments of $\gamma'$. The segments of $\gamma'$ have horizontal diameter ${1}/{2\xi}$, therefore their length is at least ${(4|\eta|\xi - M)}/{2\xi}=2|\eta|-{M}/{2\xi}$. By the choice of $\xi_0$, ${M}/{2\xi} < |\eta|$, therefore all segments in $\gamma'$ have length greater than $|\eta|$.
\end{proof}
The space of directions of non-degenerate vectors in $\rtwo$ can be identified with the unit circle $\sone$. Fix the Riemannian metric induced by the Euclidean metric of $\rtwo$ on $\sone$.
\begin{corollary}
\label{cor:induction-step}
    Let $X$ be a non-empty subset of $\rtwo$. Fix $\eta \in \m R^{*}$, and let $\varepsilon,\varepsilon',\ell,\delta,\delta' > 0$. Let $A \in \mrm{SL}(2,\m Z)$, and note $A(0,1)=v$. Let $\gamma$ be a union of segments in $\rtwo$ that is $\varepsilon$-dense in $X$, such that the distances of directions of segments of $\gamma$ from the direction of $v$ are greater than $\delta$ on $\sone$, and the lengths of the segments are greater than $\ell$. Then there exists $\xi_0=\xi_0(|\eta|,A,\ell,\varepsilon',\delta,\delta') \in \nstar$ such that for any $\xi \geq \xi_0$, there exists a subset $\gamma' \subseteq \conj{A}{J_{\eta,\xi}}(\gamma)$ with the following properties:
    \begin{itemize}
        \item $\gamma'$ is a union of segments,
        \item $\gamma'$ is ($\varepsilon+\varepsilon'$)-dense in $\stre(X,\eta\cdot v)$,
        \item the directions of segments of $\gamma'$ are $\delta'$-close to the direction of $v$ on $\sone$, and
        \item the lengths of the segments of $\gamma'$ are greater than $\opnorm{A\inv}\inv |\eta|$.
    \end{itemize}
\end{corollary}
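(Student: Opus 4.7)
The plan is to reduce to Lemma \ref{lem:spread-shape} by conjugation. Since $\conj{A}{J_{\eta,\xi}}(\gamma) = A \circ J_{\eta,\xi} \circ A\inv(\gamma)$, I pull the entire configuration back through $A\inv$, apply the vertical-stretch result there, and push the output forward by $A$. In pulled-back coordinates $A\inv(v) = (0,1)$ is vertical, so the stretch direction $\eta v$ becomes $(0,\eta)$, which exactly matches the setting of Lemma \ref{lem:spread-shape}.

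First I check that $\tilde{\gamma} \coloneqq A\inv(\gamma)$ satisfies the hypotheses of Lemma \ref{lem:spread-shape}, up to constants depending on $A$. Since $A \in \mrm{SL}(2,\m Z)$, both $A$ and $A\inv$ are bi-Lipschitz, so $\tilde{\gamma}$ is a union of segments, is $\opnorm{A\inv}\varepsilon$-dense in $A\inv(X)$, and its segment lengths are at least $\opnorm{A}\inv\ell$. The smooth $\sone$-action induced by $A\inv$ converts the lower bound $\delta$ on the distance between the direction of $v$ and the directions of segments of $\gamma$ into a positive lower bound $\delta''(\delta,A)$ on the distance of the directions of segments of $\tilde{\gamma}$ from the vertical; in particular their slopes are uniformly bounded in absolute value by some $M(\delta,A) < \infty$.

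Then I apply Lemma \ref{lem:spread-shape} to $\tilde{X} = A\inv(X)$, $\tilde{\gamma}$, with $\eta$ unchanged, density constant $\opnorm{A\inv}\varepsilon$, slack parameter $\varepsilon'/\opnorm{A}$, length bound $\opnorm{A}\inv\ell$, slope cap $M$, and a slope floor $m = m(\delta',A)$ chosen so that any line with slope of norm at least $m$ is mapped by $A$ to a line whose direction is $\delta'$-close to that of $v$ on $\sone$ (another continuity argument for the $\sone$-action of $A$). This yields $\xi_0$ depending only on $|\eta|, A, \ell, \varepsilon', \delta, \delta'$, and for $\xi \geq \xi_0$ a union of segments $\tilde{\gamma}' \subseteq J_{\eta,\xi}(\tilde{\gamma})$, suitably dense in $\stre(\tilde{X},(0,\eta))$, with segments of slope norm at least $m$ and length at least $|\eta|$.

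Finally, set $\gamma' \coloneqq A(\tilde{\gamma}') \subseteq \conj{A}{J_{\eta,\xi}}(\gamma)$. Since $A$ sends $\stre(\tilde{X},(0,\eta))$ to $\stre(X,\eta v)$, the density of $\gamma'$ in $\stre(X,\eta v)$ is bounded by $\opnorm{A}$ times that of $\tilde{\gamma}'$ in $\stre(\tilde{X},(0,\eta))$; the direction property follows from the choice of $m$; and the segment lengths become at least $\opnorm{A\inv}\inv|\eta|$ because $A$ shrinks lengths by at most that factor. The main obstacle is bookkeeping: the five real parameters supplied to Lemma \ref{lem:spread-shape} must be calibrated so that after push-forward by $A$ the target bounds $\varepsilon+\varepsilon'$, $\delta'$, and $\opnorm{A\inv}\inv|\eta|$ come out as stated, and so that $\xi_0$ depends only on the permitted list of inputs.
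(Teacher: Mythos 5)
Your overall strategy is the same as the paper's: pull back by $A^{-1}$, apply Lemma~\ref{lem:spread-shape} in vertical coordinates, and push forward by $A$. The direction, length, and parameter-dependence bookkeeping all check out. But the density bookkeeping does not close as you have set it up, and this is a genuine gap.

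With your choices, Lemma~\ref{lem:spread-shape} applied to $\tilde\gamma = A^{-1}(\gamma)$ (which is $\opnorm{A^{-1}}\varepsilon$-dense in $A^{-1}(X)$) with slack $\opnorm{A}^{-1}\varepsilon'$ produces a set that is $\bigl(\opnorm{A^{-1}}\varepsilon + \opnorm{A}^{-1}\varepsilon'\bigr)$-dense in $\stre(A^{-1}(X),(0,\eta))$. Pushing forward by $A$ as a black box, as you propose, multiplies this by $\opnorm{A}$, giving $\bigl(\opnorm{A}\opnorm{A^{-1}}\,\varepsilon + \varepsilon'\bigr)$-density in $\stre(X,\eta v)$. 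Since $\opnorm{A}\opnorm{A^{-1}} > 1$ for every non-orthogonal $A \in \mrm{SL}(2,\m Z)$, this is strictly weaker than the stated bound $\varepsilon + \varepsilon'$, and there is no way to repair it by rescaling $\varepsilon'$ because $\varepsilon$ is given and may dominate. You have noticed this ("the main obstacle is bookkeeping") but the calibration you propose does not resolve it.

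The fix, which is what the paper does, requires opening up the proof of Lemma~\ref{lem:spread-shape} rather than invoking its statement. For a target point $A^{-1}(x) + (0,\tau)$ with $x \in X$, the witness produced in the proof differs from the target by a sum of two contributions: the first comes from the original $\varepsilon$-density of $\gamma$ in $X$, and it has the form $A^{-1}(w)$ with $|w| < \varepsilon$ (a vector that started in the downstairs picture, got pulled back by $A^{-1}$, and will be pushed forward again); the second, of norm at most $\opnorm{A}^{-1}\varepsilon'$, is built entirely in the pulled-back coordinates. Applying $A$ restores the first contribution to a vector of norm $< \varepsilon$ exactly (no condition-number loss), while the second scales by at most $\opnorm{A}$ to land below $\varepsilon'$. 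That decomposition is what delivers $(\varepsilon+\varepsilon')$-density; a pure black-box application of the lemma cannot.
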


\begin{proof}[Sketch of proof]
    The last lemma shows this result for $A=\mrm {Id}$. To prove this corollary, we apply the last lemma to $A\inv(\gamma)$ and $A\inv(X)$ with some appropriate parameters to obtain $\xi_0$. We then apply $A$ to the subset provided by the lemma to get the desired subset. \par
    
    This is how we find the parameters ($\varepsilon,\varepsilon',\ell,m,M$) for Lemma \ref{lem:spread-shape}. By linearity of $A$, the set $A\inv(\gamma)$ is a union of segments which have at least length $\opnorm{A}\inv \ell$. Since the distances of the directions of segments of $\gamma$ are $\delta$-away from that of $v$, one may compute the maximum slope of segments of $A\inv(\gamma)$ as a function of $A$ and $\delta$. Additionally, the set $A\inv(\gamma)$ is $\opnorm A \varepsilon$-dense in $A\inv(X)$. Take $\varepsilon'$ of the lemma to be $\opnorm{A}\inv \varepsilon'$. We may choose $m$ as a function of $A$ and $\delta'$ so as to insure that all vectors with slope more than $m$ will be mapped by $A$ to vectors whose directions are $\delta'$-close to that of $v$. \par
    
    Lemma \ref{lem:spread-shape} provides us with a subset of $J_{\eta,\xi}A\inv(\gamma)$ which is a union of segments with directions close enough to the vertical direction, and with length at least $|\eta|$. By applying $A$ to this subset we get a union of segments with directions $\delta'$-close to that of $v$, and with length at least $\opnorm{A\inv}\inv |\eta|$. As a corollary of the proof of the density requirement in Lemma \ref{lem:spread-shape}, for any point in $\stre(A\inv(X),(0,\eta))$, there exists a point in the subset provided by the lemma such that their difference can be written as $A\inv(w)+w'$ such that $|w|<\varepsilon$ and $|w'|<\opnorm{A}\inv \varepsilon'$. Therefore by applying $A$ to the subset provided by the lemma, we obtain a set that is $(\varepsilon+\varepsilon')$-dense in
    \begin{equation*}
        A(\stre(A\inv(X),(0,\eta)))=\stre(X,A(0,\eta))=\stre(X,\eta\cdot v). \qedhere
    \end{equation*}
\end{proof}
\begin{lemma}
\label{lem:induction-step}
    Let $X,Y$ be two non-empty subsets of $\rtwo$. Fix $\eta \in \m R^{*}$, $\xi \in \nstar$, and $\varepsilon > 0$. Let $A \in \mrm{SL}(2,\m Z)$. If $Y \subseteq B_{\varepsilon}(X)$, then
    \begin{equation*}
        \conj{A}{J_{\eta,\xi}}(Y) \subseteq B_{\varepsilon}(\stre(X,A(0,\eta))).
    \end{equation*}
\end{lemma}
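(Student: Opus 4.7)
The plan is to unpack the definitions and observe that the conjugation $A \circ J_{\eta,\xi} \circ A^{-1}$ displaces each point by a vector lying in the segment $[-A(0,\eta), A(0,\eta)]$, regardless of $\xi$. Once that is established, the $\varepsilon$-containment $Y \subseteq B_\varepsilon(X)$ transfers directly to the image.

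Concretely, I would fix $y \in Y$ and write $u = A^{-1}(y) = (u_1,u_2)$. From the definition $J_{\eta,\xi}(u_1,u_2) = (u_1, u_2 + \eta\phi_\xi(u_1))$, I would rewrite
\begin{equation*}
    J_{\eta,\xi}(u) - u = \bigl(0,\, \eta \phi_\xi(u_1)\bigr) = \phi_\xi(u_1) \cdot (0,\eta).
\end{equation*}
Applying the linear map $A$ and using $\phi_\xi(u_1) \in [-1,1]$ then gives
\begin{equation*}
    A J_{\eta,\xi} A^{-1}(y) - y = \phi_\xi(u_1) \cdot A(0,\eta) \in \bigl[-A(0,\eta),\, A(0,\eta)\bigr].
\end{equation*}
This is the main computation; note that $A \in \mrm{SL}(2,\m Z)$ is used only through its linearity, and the bound is uniform in $\xi$.

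Now I would use the hypothesis: pick $x \in X$ with $|y-x| < \varepsilon$, and decompose
\begin{equation*}
    A J_{\eta,\xi} A^{-1}(y) = \bigl(x + \phi_\xi(u_1) \cdot A(0,\eta)\bigr) + (y - x).
\end{equation*}
The bracketed term lies in $X + [-A(0,\eta), A(0,\eta)] = \stre(X, A(0,\eta))$ by the definition of $\stre$, and the remainder has norm $< \varepsilon$. Hence $A J_{\eta,\xi} A^{-1}(y) \in B_\varepsilon(\stre(X, A(0,\eta)))$, which is the claim.

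There is no genuine obstacle here; the statement is essentially a bookkeeping lemma that isolates the pointwise geometric effect of $\conj{A}{J_{\eta,\xi}}$, complementing Corollary \ref{cor:induction-step} which gives the finer structural information about segments. If anything, the only point to be mindful of is that the argument is uniform in the choice of $y$ and of $\xi$, so the bound $\varepsilon$ does not degrade even when $\xi$ is large — this is guaranteed by the fact that $|\phi_\xi| \leq 1$ regardless of $\xi$.
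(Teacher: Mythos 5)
Your proof is correct and follows essentially the same route as the paper: you compute that $\conj{A}{J_{\eta,\xi}}(y) - y = \phi_\xi(u_1)\cdot A(0,\eta)$ lies in $[-A(0,\eta),A(0,\eta)]$, then transfer the $\varepsilon$-closeness of $y$ to some $x\in X$ by adding the same displacement to both, landing in $B_\varepsilon(\stre(X,A(0,\eta)))$. The paper writes the displacement as $A(0,\tau)$ with $\tau\in[-|\eta|,|\eta|]$ rather than $\phi_\xi(u_1)\cdot A(0,\eta)$, but this is the same computation.
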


\begin{proof}
    Let $y$ be a point in $Y$. There exists $x \in X$ such that $d(x,y)<\varepsilon$, and thus $d(x+A(0,\tau),y+A(0,\tau))<\varepsilon$. We may write 
    \begin{equation*}
        \conj{A}{J_{\eta,\xi}}(y)=A(A\inv(y)+(0,\tau))=y+A(0,\tau),
    \end{equation*}
    where $\tau$ is in $[-|\eta|,|\eta|]$. As we know that $d(y+A(0,\tau),x+A(0,\tau))<\varepsilon$, and $x+A(0,\tau) \in \stre(X,A(0,\eta))$,
    \begin{equation*}
        \conj{A}{J_{\eta,\xi}}(y) \in B_{\varepsilon}(\stre(X,A(0,\eta))). \qedhere
    \end{equation*}
\end{proof}

\begin{lemma}
    \label{lem:spreading-to-parallelogon}
    Let $r >0$ and $\Gamma \in \shapes$ the homothety type of a rational zongon. Then, there exist $\xi_0 \in \nstar$ such that for any $\xi \geq \xi_0$, there exists $h \in \mrm{Diff}_0^{\infty}(\torus)$ such that for any $a \in {1}/{(2\xi)} + (1/\xi) \m Z$, and $b \in \m R$, $\conj{\Tilde{h}}{\Tilde{R}_{(a,b)}}(D)$ is an $r$-large approximate of $\Gamma$.
\end{lemma}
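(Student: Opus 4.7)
The plan is to construct $\tilde h$ as an iterated composition of conjugated ``jag'' maps $\conj{A_i}{J_{\eta_i,\xi}}$, each contributing a spreading in the direction of one edge of the target zonogon. First, I would write $\Gamma$ as $[\zon(c_1 v_1, \ldots, c_l v_l)]$ where each $v_i \in \ztwo$ is a primitive integer vector in pairwise distinct directions and each $c_i > 0$ is rational; for each $i$, pick $A_i \in \mrm{SL}(2, \m Z)$ with $A_i(0,1) = v_i$. After fixing a large scaling parameter $N$ (to be chosen depending on $r$) and setting $\eta_i = N c_i$, I would define $\tilde h = \tilde h_l \circ \cdots \circ \tilde h_1$ with $\tilde h_i = \conj{A_i}{J_{\eta_i, \xi}}$, replacing the triangle wave $\phi_\xi$ by a smooth periodic approximation so that $h \in \mrm{Diff}_0^\infty(\torus)$. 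Commutation with $\ztwo$-translations and isotopy to the identity for each factor guarantees the same for $\tilde h$. The threshold $\xi_0$ will be the maximum of the lower bounds on $\xi$ demanded by the two iterative arguments below.

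For the upper bound on $\conj{\tilde h}{\tilde R_{(a, b)}}(D)$, I would iterate Lemma \ref{lem:induction-step} through the $2l$ maps and one translation assembling the conjugation applied to $D$: first $\tilde h_l^{-1}, \tilde h_{l-1}^{-1}, \ldots, \tilde h_1^{-1}$, then $\tilde R_{(a, b)}$, then $\tilde h_1, \tilde h_2, \ldots, \tilde h_l$. Each application of $\tilde h_i^{\pm 1}$ enlarges the enclosing set by a stretch of $\eta_i v_i$ in direction $v_i$ (using that $\stre$ is symmetric in its vector argument), while the translation only shifts it. The net outcome is $\conj{\tilde h}{\tilde R_{(a, b)}}(D) \subseteq B_\varepsilon(x_0 + (a, b) + 2\zon(\eta_1 v_1, \ldots, \eta_l v_l))$ for some base point $x_0 \in D$, with the factor of two coming from the contributions of both halves of the sequence.

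For the matching density I would pick an initial union $\gamma_0 \subseteq D$ of segments that is $\varepsilon$-dense in $D$ with generic bounded slope, and iterate Corollary \ref{cor:induction-step} to maintain density inside the stretched zonogon at each stage. Its direction hypothesis is comfortable at most steps, since consecutive operations use distinct $v_i$'s, \emph{except} at the ``turnaround'' between $\tilde h_1^{-1}$ and $\tilde h_1$, where the segments in hand are close to direction $v_1$ on both sides. At that single step I would replace the corollary by a direct calculation: a near-$v_1$ segment (i.e.\ near-vertical in $A_1^{-1}$ coordinates) is sent by $J_{\eta_1, \xi}$ to a translate of itself with vertical shift $\eta_1 \phi_\xi(x)$, and as $x$ ranges over the densely many horizontal positions of such segments the shift sweeps $[-\eta_1, \eta_1]$ up to granularity $O(1/\xi)$, so the union of images is still $O(\max(\varepsilon, 1/\xi))$-dense in the further-stretched region. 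Combining the two bounds via Lemma \ref{lem:perturbation}, and choosing $N$ large enough that $2N \zon(c_1 v_1, \ldots, c_l v_l)$ far exceeds $r$ in diameter, and then $\xi$ large enough that $\varepsilon$ is a small fraction of this diameter, gives the claim.

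The main obstacle is the inductive bookkeeping: carrying the accumulated Hausdorff error and the length, slope, and direction parameters for Corollary \ref{cor:induction-step} through $2l$ consecutive applications, then taking a single threshold $\xi_0$ that works at every step. The turnaround case genuinely needs the dedicated geometric argument above, since Corollary \ref{cor:induction-step} does not apply there. A secondary technical point is the smoothing of $\phi_\xi$; the Hausdorff-approximate conclusions of the lemmas applied here are stable under such a replacement.
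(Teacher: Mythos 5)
Your proposal never uses the hypothesis $a \in \tfrac{1}{2\xi} + \tfrac{1}{\xi}\m Z$, and this is a fatal omission. That hypothesis is exactly what prevents the innermost pair $\tilde h_1 \circ \tilde R_{(a,b)} \circ \tilde h_1^{-1}$ from collapsing to a pure translation. Writing $\tilde h_1 = A_1 J_{\eta_1,\xi} A_1^{-1}$ and $(a',b') \coloneqq A_1^{-1}(a,b)$, one has
\begin{equation*}
\tilde h_1 \,\tilde R_{(a,b)}\, \tilde h_1^{-1} \;=\; A_1\bigl(J_{\eta_1,\xi}\,\tilde R_{(a',b')}\,J_{-\eta_1,\xi}\bigr)A_1^{-1},
\end{equation*}
and since $\phi_\xi$ is $\tfrac{1}{\xi}$-periodic, if $a' \in \tfrac{1}{\xi}\m Z$ then $J_{\eta_1,\xi}$ commutes with $\tilde R_{(a',b')}$ and the bracket equals $\tilde R_{(a',b')}$: the innermost jag is completely undone and contributes no spreading. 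Because $b$ is arbitrary and $A_1 \neq \mrm{Id}$ in general, $a'$ (the first coordinate of $A_1^{-1}(a,b)$, an integer linear combination of $a$ and $b$) is unconstrained, so you cannot rule this out. Consequently the ``direct calculation'' at the turnaround rests on a false picture: the near-vertical segments produced by $J_{-\eta_1,\xi}$ need not remain near-vertical after $J_{\eta_1,\xi}$; in the degenerate phase they are sent straight back to their pre-image shape, and the union of images is not dense in any further-stretched region.

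The paper's proof eliminates the turnaround rather than patching it. It inserts an extra \emph{unconjugated} vertical jag $J_{\eta_l,\xi}$ as the innermost factor, with $v_l=(0,1)$, $A_l=\mrm{Id}$, $\eta_l=\tfrac12$. Because this factor is not conjugated by any matrix, the translation it sees has first coordinate exactly $a$, and the hypothesis $a \in \tfrac{1}{2\xi}+\tfrac{1}{\xi}\m Z$ gives $\phi_\xi(\cdot+a)=-\phi_\xi$, hence the anticommutation $J_{1/2,\xi}\,\tilde R_{(a,b)}\,J_{-1/2,\xi}=\tilde R_{(a,b)}\,J_{-1,\xi}$. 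This folds the middle triple into a single step (a translation composed with a doubled vertical jag), so the sequence of stretch directions becomes $v_0,\dots,v_{l-1},(0,1),v_{l-1},\dots,v_0$ with all consecutive pairs distinct (when $v_{l-1}$ is not vertical, a case the paper treats separately). Corollary \ref{cor:induction-step} then applies uniformly at every step and no ad hoc argument at the center is needed. Your scheme could be repaired by adopting this device, but as written there is a genuine gap.
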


\begin{proof}
    To show this result, we construct a piecewise affine torus homeomorphism with the desired property. Then we may find the desired diffeomorphism $h$ by a small perturbation of this homeomorphism. We start by introducing the lift of the desired piecewise affine torus homeomorphism $F:\rtwo \to \rtwo$. This map is going to be a product of maps which spread domains in different directions. We then introduce a sequence of domains $(D_i)$ which terminates with $\conj{F}{\Tilde{R}_{(a,b)}}(D)$, and a sequence of zonogons $(K_i)$ terminating with homothety type $\Gamma$. These sequences approximate each other with respect to the Hausdorff distance. We conclude by smoothing out the map $F$.
    \begin{itemize}[leftmargin=*]
        \item \underline{Introducing F:} Let $\zon(v_0,\dots,v_{l-1})$ be a rational zonogon with diameter greater than $6r+10$, representing the homothety type $\Gamma$. Additionally, take $v_l=(0,1)$. For each $0 \leq i \leq l$, there exists $A_i \in \mrm{SL}(2,\m Z)$ and $\eta_i \in \m R^*$ such that $A_i(0,\eta_i)={v_i}/{2}$. In particular, $A_l = \mrm{Id}$ and $\eta_l=1/2$. Fix
        \begin{equation}
        \label{eq:lambda}
            \lambda = \min_{0 \leq i \leq l} \set {\opnorm{A_i\inv}\inv|\eta_i|}.
        \end{equation}
        Let $\varepsilon'={1}/{2l}$, and let $\delta=\delta'$ to be half the minimal distance between the directions of $v_1,\dots,v_l$ on $\sone$. Then by applying Corollary \ref{cor:induction-step} we may define $\xi_0 \coloneqq \max_{0 \leq i \leq l} \xi_0(|\eta_i|,A_i,\lambda,\varepsilon',\delta,\delta')$. Let $\xi \geq \xi_0$. Define $F:\rtwo \to \rtwo$ as follows:
        \begin{equation*}
            F = (\conj{A_0}{J_{\eta_0,\xi}})\dots(\conj{A_l}{J_{\eta_l,\xi}})
        \end{equation*}
        As $A_l=\mrm{Id}$ and $\eta_l=1/2$, $F=(\conj{A_0}{J_{\eta_0,\xi}})\dots(\conj{A_{l-1}}{J_{\eta_{l-1},\xi}})J_{1/2,\xi}$. Let $a \in {1}/{(2\xi)} + \m Z$ and $b \in \m R$. We have $J_{1/2,\xi}\Tilde{R}_{(a,b)}J_{-1/2,\xi} = \Tilde{R}_{(a,b)}J_{-1,\xi}$. Therefore,
        \begin{multline*}
            \conj{F}{\Tilde{R}_{(a,b)}}=(\conj{A_0}{J_{\eta_0,\xi}})\dots(\conj{A_{l-1}}{J_{\eta_{l-1},\xi}})\Tilde{R}_{(a,b)}J_{-1,\xi}\\(\conj{A_{l-1}}{J_{-\eta_{l-1},\xi}})\dots(\conj{A_0}{J_{-\eta_0,\xi}}).
        \end{multline*}
        For all $0 \leq i \leq l-1$, denote $\conj{A_i}{J_{-\eta_i,\xi}}$ as $F_i$. Let $F_l=\Tilde{R}_{(a,b)}J_{-1,\xi}$. For all $l+1 \leq i \leq 2l$, let $F_i$ denote $\conj{A_{2l-i}}{J_{\eta_{(2l-i)},\xi}}=F_{2l-i}\inv$. Hence $$\conj{F}{\Tilde{R}_{(a,b)}} = F_{2l} \circ \dots \circ F_0.$$
        If $v_{l-1}$ is in the vertical direction, $A_{l-1}=\mrm{Id}$ and thus
        \begin{equation*}
            \conj{F}{\Tilde{R}_{(a,b)}}=F_{2l}\dots F_{l+2}\Tilde{R}_{(a,b)}J_{-2\eta_{l-1}-1,\xi}F_{l-2}\dots F_0.
        \end{equation*}
        For the rest of the proof, we assume $v_{l-1}$ is not vertical. The vertical case is similar.

        \item \underline{Introducing $D_i$ and $K_i$:} Let $D_0=D$ and $K_0=\set{(1/2,1/2)}$. We construct a sequence of fundamental domains $(D_i)_{0\leq i \leq 2l+1}$ and a sequence of zonogons $(K_i)_{0\leq i \leq 2l+1}$ recursively. For any $0\leq i \leq 2l$, let $D_{i+1}=F_i(D_i)$. Therefore $$D_{2l+1}=\conj{F}{\Tilde{R}_{(a,b)}}(D).$$
        For any $0\leq i \leq l-1$, let $K_{i+1}=\stre(K_i,v_{i}/2)=(1/2,1/2)+\zon(v_0/2,\dots,v_{i}/2)$. Take $K_{l+1}=\Tilde{R}_{(a,b)}(K_l)=(a+1/2,b+1/2)+\zon(v_0/2,\dots,v_{l-1}/2)$. Finally, for any $l+1\leq i \leq 2l$, let $K_{i+1}=\stre(K_i,v_{2l-i}/2)$. Hence we have $$K_{2l+1}=(a+1/2,b+1/2)+\zon(v_0,\dots,v_{l-1}) \in \Gamma.$$ \par
        We wish to show that $\conj{F}{\Tilde{R}_{(a,b)}}(D)$ is an $(r+1)$-large approximate of $\Gamma$. In other words, we would like to prove that $D_{2l+1}$ is an $(r+1)$-large approximate of $[K_{2l+1}]$. To do this, we need to show that the Hausdorff distance between $D_{2l+1}$ and $K_{2l+1}$ is sufficiently small.

        \item \underline{Proving $D_{2l+1} \subseteq B_2(K_{2l+1})$}: Let us show by induction that for $0 \leq i \leq l$, $D_i \subseteq B_1(K_i)$. By choice of $D_0$ and $K_0$, we have $D_0 \subseteq B_1(K_0)$. Suppose that $D_i \subseteq B_1(K_i)$ for some $0 \leq i \leq l-1$. By Lemma \ref{lem:induction-step},
        \begin{equation*}
            \conj{A_i}{J_{-\eta_i,\xi}}(D_i) \subseteq B_1(\stre(K_i,A_i(0,-\eta_i))).
        \end{equation*}
        Therefore we have $F_i(D_i) \subseteq B_1(\stre(K_i,v_i/2))$, so $D_{i+1}\subseteq B_1(K_{i+1})$. Again by Lemma \ref{lem:induction-step},
        \begin{equation*}
            J_{-1,\xi}(D_l)\subseteq B_1(\stre(K_l,(0,1))) \subseteq B_2(K_l).
        \end{equation*}
        Therefore $D_{l+1}=\Tilde{R}_{(a,b)}J_{-1,\xi}(D_l)\subseteq B_2(\Tilde{R}_{(a,b)}K_l)=B_2(K_{l+1})$. In a similar way using induction we can prove that for all $l+1 \leq i \leq 2l+1$ we have $D_i \subseteq B_2(K_i)$.

        \item \underline{Proving $K_{2l+1} \subseteq D_2(K_{2l+1})$}: Let $v_{-1}$ denote any vector whose direction is $2\delta$-away from that of $v_0$. We construct a sequence $(\gamma_i)_{0 \leq i \leq 2l+1}$ of subsets of $\rtwo$, such that for each $i$,
        \begin{itemize}
            \item $\gamma_i \subseteq D_i$,
            \item $\gamma_i$ is a union of segments,
            \item $\gamma_i$ is $(i+1)/(2l)$-dense in $K_i$,
            \item The directions of segments of $\gamma_i$ are $\delta$-close to the direction of $v_{i-1}$ on $\sone$, and
            \item the lengths of segments of $\gamma_i$ are greater than $\lambda$ (see equation \eqref{eq:lambda}).
        \end{itemize}
        Let $\gamma_0$ be the maximal segment of direction $v_{-1}$ passing through $K_0$ included in $D=D_0$. Its length will be at least $1 \geq \lambda$. The segment $\gamma_0$ is $1/(2l)-dense$ in $K_0$, since $K_0 \subseteq \gamma_0$. Hence $\gamma_0$ has the desired property. Suppose we have $\gamma_i$ for some $0\leq i \leq l-1$ with the above properties. As the directions of its segments are $\delta$-close to the direction of $v_{i-1}$, they are $\delta$-away from the direction of $v_i$. Therefore by Corollary \ref{cor:induction-step}, there exists a union of segments $\gamma_{i+1}$ such that
        \begin{equation*}
            \gamma_{i+1} \subseteq \conj{A_i}{J_{-\eta_i,\xi}}(\gamma_i) = F_i(\gamma_i) \subseteq F_i(D_i) = D_{i+1},
        \end{equation*}
        and such that $\gamma_{i+1}$ is $(i+2)/2l$-dense in $K_{i+1}=\stre(K_i,v_{i}/2)$. These segments have at least length $\opnorm{A_i\inv}\inv|\eta_i| \geq \lambda$, and their directions are $\delta$-close to $v_{i}$. Similarly, there exists a union of segments $\gamma_{l+1}$ such that
        \begin{equation*}
            \gamma_{l+1} \subseteq \Tilde{R}_{(a,b)}J_{-1,\xi}(\gamma_l) \subseteq D_{l+1},
        \end{equation*}
        and $\gamma_{l+a}$ is $(l+2)/2l$-dense in $(a,b)+\stre(K_l,(0,1))$. But $K_{l+1}$ is a subset of $(a,b)+\stre(K_l,(0,1))$, so $\gamma_{l+1}$ is $(l+2)/2l$-dense in $K_{l+1}$. Like before, by applying Corollary \ref{cor:induction-step} for $\gamma_i$, $F_i$, and $K_i$ where $l+1 \leq i \leq 2l$, we may finish the construction of the sequence $(\gamma_i)$ with the above properties. In particular, $\gamma_{2l+1}$ is included in $D_{2l+1}$ and is $(2l+2)/2l$-dense and hence $2$-dense in $K_{2l+1}$. Therefore $K_{2l+1} \subseteq B_2(D_{2l+1})$.

        \item \underline{Conclusion:} We have already shown that $D_{2l+1} \subseteq B_2(K_{2l+1})$, so the Hausdorff distance between $D_{2l+1}$ and $K_{2l+1}$ is less than $2$. Given that $\diam(K_{2l+1})$ equals $\diam(\zon(v_0,\dots,v_{l-1}))$, we have
        \begin{equation*}
            \diam(D_{2l+1}) \geq \diam(K_{2l+1}) - 4 > 6r+6 > r+1.
        \end{equation*}
        Consequently, the Hausdorff distance between $D_{2l+1}/\diam(D_{2l+1})$ and a well-chosen translation of $K_{2l+1}/\diam(K_{2l+1})$ will be less than $6/(\diam(K_{2l+1})-4)$ and hence less than $1/(r+1)$. Therefore
        \begin{equation*}
            \isla{\conj{F}{\Tilde{R}_{(a,b)}}(D)}{r+1}{\Gamma}.
        \end{equation*}
        The plane homeomorphism $F$ quotients down to a torus homeomorphism $f$. According to Lemma \ref{lem:perturbation}, for any torus diffeomorphism $h$ that has sufficiently small $C^0$ distance from $f$,
        \begin{equation*}
            \isla{\conj{\Tilde{h}}{\Tilde{R}_{(a,b)}}(D)}{r}{\Gamma}.
        \end{equation*}
        Each of the maps $F_i$ is accumulated by plane diffeomorphisms commuting with $\ztwo$. We can obtain these diffeomorphisms by considering smooth $1/\xi$-periodic approximations of the function $\phi_{\xi}$. In this way we can find a smooth torus diffeomorphism that is sufficiently close to $f$. \qedhere
    \end{itemize}
\end{proof}

\begin{proof}[Proof of Lemma \ref{lem:two}]
\begin{sloppypar}
    By Lemma \ref{lem:C}, for a large enough $j$, we have $\conj{C_q}{P_{j}^{C_q\inv(\Gamma)}} \subseteq P_{\ell}^{\Gamma}$. Let $\xi_0$ and $h_0$ associated to it be as given by Lemma \ref{lem:spreading-to-parallelogon} applied to $r=j$ and the rational zonogon $C_q\inv(\Gamma)$. We may find a sequence $(\alpha_i)$ converging to $C_q\inv(p/q,0)=(p,0)$, such that $\alpha_i = ((2s_i+1)/(2t_i \xi_0),0)$, with $s_i 
    \in \m Z$ and $t_i \in \m Z^*$. Hence $t_i\alpha_i = (s_i/\xi_0 + {1}/{(2\xi_0)},0)$. Therefore for each $i$, $\conj{\Tilde{h}_0}{\Tilde{R}_{\alpha_i}^{t_i}}(D)$ is a $j$-large approximate of ${C_q\inv(\Gamma)}$, so $\conj{h_0}{R_{\alpha_i}} \in P_{j}^{C_q\inv(\Gamma)}$.
\end{sloppypar}
    Define $h \coloneqq \conj{C_q}{h_0}$ (using Lemma \ref{lem:conj-by-C}). This torus diffeomorphism commutes with $R_{(1/q,0)}$. For each $i$, by Lemma \ref{lem:C},
    \begin{equation*}
        \conj{h}{R_{C_q(\alpha_i)}} = \conj{C_q}{\conj{h_0}{R_{\alpha_i}}} \in \conj{C_q}{P_{j}^{C_q\inv(\Gamma)}} \subseteq P_{\ell}^{\Gamma}.
    \end{equation*}
    Hence the homeomorphism $h$ and the sequence of vectors $(\theta_i)_i=(C_q(\alpha_i))_i$ satisfy the conditions required by the lemma.
\end{proof}

\bibliographystyle{alpha}
\bibliography{sources}

\end{document}